\documentclass[11pt]{article}

\usepackage[margin=1.12in]{geometry}
\usepackage[T1]{fontenc}
\usepackage{lmodern}
\usepackage{amsmath,amssymb,amsthm,mathtools}
\usepackage{enumitem}
\usepackage{microtype}
\usepackage{hyperref}

\newenvironment{keywords}{%
	\par\medskip
	\noindent\textbf{Keywords.}\enspace\ignorespaces
}{%
	\par\medskip
}

\newcommand{\MSC}[2][]{%
	\par\smallskip
	\noindent\textbf{#1 Mathematics Subject Classification.}\enspace #2\par
}

\hypersetup{colorlinks=true,linkcolor=blue,citecolor=blue,urlcolor=blue}

\newtheorem{theorem}{Theorem}[section]
\newtheorem{lemma}[theorem]{Lemma}
\newtheorem{corollary}[theorem]{Corollary}
\newtheorem{proposition}[theorem]{Proposition}

\theoremstyle{remark}

\newcommand{\Tr}{\operatorname{Tr}}
\newcommand{\spec}{\operatorname{spec}}
\newcommand{\diag}{\operatorname{diag}}

\newcommand{\bbC}{\mathbb C}
\newcommand{\cR}{\mathcal R}
\newcommand{\eps}{\varepsilon}

\newcommand{\Mn}{M_n(\bbC)}
\newcommand{\Hn}{\mathbb H_n}
\newcommand{\Pn}{\mathbb P_n}
\newcommand{\Psn}{\overline{\mathbb P}_n}

\title{Sharp Hyperbolic Cutoffs and Dimension-Sharp Counterexamples for Reverse Araki-Type Inequalities}

\author{
	Trung Dung Vuong\\
	\small High School for the Gifted, Vietnam National University Ho Chi Minh City, Vietnam\\
	\small \texttt{vtdung@ptnk.edu.vn}
}

\begin{document}
\maketitle

\begin{abstract}
	We study reverse Araki-type trace inequalities and log-majorizations beyond
	the exponent $2$. For arbitrary nonnegative nondecreasing weights, we show that
	$s=2$ is the sharp dimension-free boundary: for every $s>2$, explicit
	one-parameter $3\times3$ positive definite examples violate the reverse
	Liu--Cheng trace inequality and the corresponding dual formulation of
	Shi--Wei--Wang, whereas the reverse inequality remains valid for every
	$s\geq1$ in dimension $2$.
	
	For power weights, a larger region survives and is bounded by a sharp
	hyperbola. In normalized variables, for $s>2$,
	\[
	A^{r+s}B^s\succ_{\log}A^r(A^{1/2}BA^{1/2})^s
	\]
	holds for all positive semidefinite matrices in every finite dimension if and
	only if $0\leq r\leq s/(s-2)$; beyond this range, even the associated trace
	inequality fails for $3\times3$ positive definite matrices. Equivalently, for
	$0<p\leq q$ and $q>2p$, the sharp condition is
	$0\leq r\leq pq/(q-2p)$. Combined with the known all-$r$ regime
	$p\leq q\leq2p$, this completes the reverse log-majorization phase diagram.
\end{abstract}

\begin{keywords}
	Araki--Lieb--Thirring inequality; trace inequality;
	log-majorization; Tanahashi inequality; positive semidefinite matrices.
	
	\MSC[2020]{Primary 15A42; Secondary 15A45, 47A63, 47A60.}
\end{keywords}

\section{Introduction}

For a fixed dimension $n$, let $\Mn$ be the algebra of complex $n\times n$
matrices, let $\Hn$ be the real vector space of Hermitian matrices, and let
$\Pn$ and $\Psn$ denote the cones of positive definite and positive
semidefinite matrices, respectively. For a matrix $X$ with nonnegative
eigenvalues, write
\[
\lambda_1(X)\geq\cdots\geq\lambda_n(X)\geq0.
\]
For two such matrices, $X\prec_{\log}Y$ means
\[
\prod_{j=1}^k\lambda_j(X)
\leq
\prod_{j=1}^k\lambda_j(Y),
\qquad 1\leq k<n,
\]
together with $\det X=\det Y$. We use $X\succ_{\log}Y$ for the reverse
relation. Since log-majorization implies weak majorization of the eigenvalue
vectors, it also implies the corresponding trace inequality. Consequently, a
trace counterexample is enough to rule out a proposed log-majorization.

Products such as $A^{r+s}B^s$ and
$A^r(A^{1/2}BA^{1/2})^s$ need not be Hermitian. In log-majorization
statements, they are interpreted through their spectra. Indeed, if $A>0$,
$B\geq0$, and $C=A^{1/2}BA^{1/2}$, then
\[
A^{r+s}B^s
\sim
A^{(r+s)/2}B^sA^{(r+s)/2},
\qquad
A^rC^s
\sim
A^{r/2}C^sA^{r/2},
\]
where the matrices on the right are positive semidefinite. Singular cases are
handled by regularization; this is carried out explicitly in the proof of
Theorem~\ref{thm:sharp-power-log}.

We shall also use the elementary fact that $0\leq S\leq T$ and $E\geq0$ imply
\[
\Tr SE\leq\Tr TE,
\]
because
\[
\Tr(T-S)E
=
\Tr\bigl(E^{1/2}(T-S)E^{1/2}\bigr)\geq0.
\]

For positive semidefinite matrices $A,B$, the Araki--Lieb--Thirring inequality
states that
\begin{equation}\label{eq:ALT-trace}
	\Tr(A^{1/2}BA^{1/2})^s\leq\Tr A^sB^s,
	\qquad s\geq1,
\end{equation}
with the reverse inequality for $0\leq s\leq1$; see \cite{Araki}. Its standard
log-majorization refinement is
\begin{equation}\label{eq:ALT-log}
	(A^{1/2}BA^{1/2})^s
	\prec_{\log}
	A^{s/2}B^sA^{s/2},
	\qquad s\geq1;
\end{equation}
see, for example, \cite{AndoHiai,Hiai2016}.

Liu and Cheng \cite{LiuCheng} proved the monotone-weight extension of
\eqref{eq:ALT-trace} in the sublinear range: if $f$ is nonnegative and
nondecreasing on an interval containing $\spec(A)$, then
\begin{equation}\label{eq:LC-sublinear}
	\Tr f(A)A^sB^s
	\leq
	\Tr f(A)(A^{1/2}BA^{1/2})^s,
	\qquad 0\leq s\leq1.
\end{equation}
They conjectured the reverse inequality
\begin{equation}\label{eq:LC-conj1}
	\Tr f(A)A^sB^s
	\geq
	\Tr f(A)(A^{1/2}BA^{1/2})^s,
	\qquad s\geq1,
\end{equation}
and the reverse GBLP-type log-majorization
\begin{equation}\label{eq:LC-conj2}
	A^{r+q}B^q
	\succ_{\log}
	A^r\bigl(A^{p/2}B^pA^{p/2}\bigr)^{q/p},
	\qquad 0<p\leq q,\quad r\geq0.
\end{equation}
The trace consequence of \eqref{eq:LC-conj2} with $p=1$ and $q=s$ yields the
power-weight case $f(t)=t^r$ of \eqref{eq:LC-conj1}; see also
\cite{BebianoLemosProvidencia,Furuta2012,LemosSoares,
	MatsumotoNakamotoFujii}.

Shi, Wei, and Wang \cite{ShiWeiWang} proved \eqref{eq:LC-conj1} for
$1\leq s\leq2$ and established \eqref{eq:LC-conj2} for
\[
p\leq q\leq2p,\qquad r\geq0.
\]
They also formulated the corresponding questions in the remaining range
$s>2$, or equivalently $q>2p$ for power weights.

The previously known and new parts are separated as follows. The direct
monotone-weight inequality for $0\leq s\leq1$ and the reverse inequality for
$1\leq s\leq2$ are background results of Liu--Cheng and Shi--Wei--Wang,
respectively. Our contribution in the monotone-weight setting is the failure
for $s>2$, together with the proof that dimension $3$ is minimal. At the
power/log-majorization level, the all-$r$ regime $p\leq q\leq2p$ and the
reverse GBLP-type regime $q>2p$, $0\leq r\leq p$, are background. We determine
the entire remaining region and construct counterexamples exactly beyond its
boundary.

The present paper resolves these questions and identifies two distinct
boundaries. For arbitrary monotone weights, $s=2$ is the sharp
dimension-free endpoint: for every $s>2$, we construct explicit
one-parameter $3\times3$ positive definite counterexamples to
\eqref{eq:LC-conj1}. The same construction disproves the dual $s>2$
formulation of Shi--Wei--Wang, whose hypothesis requires
$x\mapsto x^s g(x)$ to be nonnegative and nonincreasing. In contrast,
\eqref{eq:LC-conj1} remains valid for every $s\geq1$ in dimension $2$, so the
obstruction is dimension-sharp.

For power weights, the positive region extends beyond $s=2$ up to a sharp
hyperbola. More precisely, in the reverse Liu--Cheng range $0<p\leq q$, we
prove that
\begin{equation}\label{eq:intro-corrected-phase}
	A^{r+q}B^q
	\succ_{\log}
	A^r\bigl(A^{p/2}B^pA^{p/2}\bigr)^{q/p}
\end{equation}
holds for all positive semidefinite matrices in every finite dimension if and
only if either
\[
p\leq q\leq2p,
\]
or
\[
q>2p
\quad\text{and}\quad
0\leq r\leq\frac{pq}{q-2p}.
\]
The first regime is the all-$r$ theorem of Shi--Wei--Wang. When $q>2p$,
previous reverse GBLP-type results cover $0\leq r\leq p$, while our new
affirmative range is
\[
p<r\leq\frac{pq}{q-2p}.
\]
For $r>pq/(q-2p)$, even the corresponding trace inequality fails for
$3\times3$ positive definite matrices. Under the normalization $p=1$,
$q=s$, this boundary becomes $r=s/(s-2)$ for $s>2$.

The counterexamples are explicit rank-one blow-up constructions rather than
numerical examples; the balance of their asymptotic exponents produces the
hyperbolic cutoff. The positive power-weight result follows from Tanahashi's
negative-power form of the Furuta inequality \cite{Tanahashi}. A largest
eigenvalue comparison is then lifted to log-majorization through
antisymmetric tensor powers. The two-dimensional result is obtained from an
endpoint comparison specific to $2\times2$ positive semidefinite matrices.

Section~\ref{sec:counterexamples} constructs the counterexamples and proves the
sharp obstruction beyond the hyperbola. Section~\ref{sec:tanahashi}
establishes the positive power-weight range. Section~\ref{sec:monotone}
derives the monotone-weight phase transition and the two-dimensional theorem.
Section~\ref{sec:corrected} returns to the original $(p,q,r)$ variables.
Throughout, phase diagrams refer to $0<p\leq q$; the complementary classical
range $0<q\leq p$ is not re-proved here.

\section{Counterexamples beyond $s=2$ and beyond the hyperbola}\label{sec:counterexamples}

We first give structural counterexamples. These examples are not numerical
accidents. The failure is caused by a large one-dimensional direction of $B$ which
is invisible in a chosen diagonal entry of $B^s$, but becomes visible after the
sandwiching by $A^{1/2}$.
The counterexamples are driven by the following rank-one blow-up estimate.

\begin{lemma}\label{lem:rank-one-fixed}
	Let $T_0\geq0$, let $w\neq0$, and, for a scalar parameter $M>0$, put
	\[
	T_M=T_0+Mww^*.
	\]
	Set $d=\|w\|^2$. Let $x$ be a unit vector such that $x\perp w$, and suppose
	\[
	h:=\langle x,T_0w\rangle\neq0.
	\]
	Then, for every $s>2$,
	\begin{equation}\label{eq:rank-one-fixed-asymp}
		\langle x,T_M^s x\rangle
		=
		|h|^2d^{s-3}M^{s-2}(1+o(1)),
		\qquad M\to\infty.
	\end{equation}
\end{lemma}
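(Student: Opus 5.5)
Normalize by $u=w/\sqrt d$, so $T_M=T_0+\mu uu^*$ with $\mu=Md\to\infty$. The plan is to use the spectral decomposition of $T_M$ for large $\mu$ and perturbation theory for the top eigenpair. As $\mu\to\infty$, $T_M$ has one eigenvalue
\[
\lambda_M=\mu+\langle u,T_0u\rangle+O(\mu^{-1}),
\]
with unit eigenvector
\[
v_M=u+\mu^{-1}(I-uu^*)T_0u+O(\mu^{-2}).
\]
The remaining $n-1$ eigenvalues stay bounded; they converge to the spectrum of the compression of $T_0$ to $u^\perp$. I would obtain both facts either from a resolvent/Schur complement argument with respect to the splitting $\bbC u\oplus u^\perp$, or by applying analytic perturbation theory to $\mu^{-1}T_M=uu^*+\mu^{-1}T_0$ at $\eps=\mu^{-1}=0$. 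There $uu^*$ has the simple eigenvalue $1$, so the eigenvalue and eigenprojection are analytic in $\eps$.

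Next, write
\[
\langle x,T_M^sx\rangle=\lambda_M^s|\langle x,v_M\rangle|^2+\langle x,T_M^s(I-P_M)x\rangle,
\]
where $P_M=v_Mv_M^*$. For the first term, $x\perp u$ gives
\[
\langle x,v_M\rangle=\mu^{-1}\langle x,T_0u\rangle+O(\mu^{-2})=\mu^{-1}h/\sqrt d+O(\mu^{-2}).
\]
Since $\lambda_M^s=\mu^s(1+o(1))$, the first term equals
\[
\mu^{s-2}|h|^2/d\,(1+o(1))=|h|^2d^{s-3}M^{s-2}(1+o(1)).
\]
The second term is bounded by $\|T_M(I-P_M)\|^s$, which is $O(1)$ because the other eigenvalues are bounded. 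This is $o(M^{s-2})$ exactly when $s>2$, which is where the hypothesis enters. For the first term, $h\ne0$ guarantees that the leading coefficient is nonzero.

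The main obstacle is making the eigenvector expansion rigorous with a uniform $O(\mu^{-2})$ remainder, and controlling the bounded part of the spectrum. I would try the analytic perturbation route first: the eigenprojection $P(\eps)$ of $uu^*+\eps T_0$ near $1$ is analytic, with
\[
P'(0)=(I-uu^*)T_0uu^*+uu^*T_0(I-uu^*).
\]
Then
\[
\langle x,P(\eps)x\rangle=\eps^2|\langle x,T_0u\rangle|^2+O(\eps^3),
\]
because $\langle x,P(0)x\rangle=0$ and the first-order term vanishes for $x\perp u$. The $\eps^2$ coefficient is computed from $P''(0)$, or more simply from the fact that $P=P^2$ forces $\langle x,Px\rangle=\|Px\|^2$ with $Px=\eps\,\langle u,\cdot\rangle$-type terms. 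Since $\langle x,P(\eps)x\rangle=|\langle x,v_M\rangle|^2$, this avoids choosing eigenvector phases. Boundedness of the complementary spectrum follows from Weyl interlacing for a rank-one perturbation: all but the top eigenvalue of $T_M$ lie in $[0,\lambda_{\max}(T_0)]$.
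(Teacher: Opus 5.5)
Your proposal is correct and follows essentially the same route as the paper. Both split $T_M^s$ spectrally and show that the top eigenpair contributes $\lambda_M^s|\langle x,v_M\rangle|^2\sim|h|^2d^{s-3}M^{s-2}$, while the rest is $O(1)$, which is negligible precisely because $s>2$.

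The one difference is how the overlap is obtained. The paper pairs the eigenvalue equation with $x\perp w$ to get the exact identity $\mu_M\langle x,q_M\rangle=\langle x,T_0q_M\rangle$. This needs only the zeroth-order fact $q_M\to w/\sqrt d$, so it sidesteps the first-order eigenvector/eigenprojection expansion you identified as the main obstacle.
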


\begin{proof}
	The matrix $ww^*$ has the simple nonzero eigenvalue $d$. Hence the largest
	eigenvalue $\mu_M$ of $T_M$ satisfies $\mu_M=dM+O(1)$, its normalized eigenvector
	$q_M$ satisfies $q_M\to w/\sqrt d$, and all remaining eigenvalues remain bounded.
	Since $x\perp w$, the eigenvalue equation gives
	\[
	\mu_M\langle x,q_M\rangle=\langle x,T_0q_M\rangle,
	\]
	and therefore
	\[
	M\langle x,q_M\rangle\longrightarrow \frac{h}{d^{3/2}}.
	\]
	The top eigenvalue contributes
	\[
	\mu_M^s|\langle x,q_M\rangle|^2
	=|h|^2d^{s-3}M^{s-2}(1+o(1)),
	\]
	whereas the remaining spectral contribution is $O(1)$. Since $s>2$, this proves
	\eqref{eq:rank-one-fixed-asymp}.
\end{proof}

We now disprove the monotone-weight conjecture for every $s>2$ by explicit
one-parameter positive definite examples in dimension $3$.

\begin{theorem}\label{thm:monotone-counter}
	For every $s>2$, there exist $A,B\in\mathbb P_3$ and a nonnegative
	nondecreasing continuous function $f$ on an interval containing $\spec(A)$ such
	that
	\begin{equation}\label{eq:monotone-failure}
		\Tr f(A)A^sB^s
		<
		\Tr f(A)(A^{1/2}BA^{1/2})^s.
	\end{equation}
	Consequently, \eqref{eq:LC-conj1} is false in general.
\end{theorem}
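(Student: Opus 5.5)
The plan is to apply Lemma~\ref{lem:rank-one-fixed} twice, once to $B$ and once to $C=A^{1/2}BA^{1/2}$, using a diagonal $A$ and a weight $f$ that sees only the top eigenvector of $A$.

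\textbf{Setup.} Take $A=\diag(1,a_2,a_3)$ with $1>a_2>a_3>0$ and $a_2\neq a_3$. Take the continuous nondecreasing weight $f(t)=\max(t-c,0)$ with $a_2<c<1$. Then $f(A)=(1-c)e_1e_1^*$, and both traces in \eqref{eq:monotone-failure} reduce to diagonal $(1,1)$ entries:
\[
\Tr f(A)A^sB^s=(1-c)\langle e_1,B^se_1\rangle,
\qquad
\Tr f(A)C^s=(1-c)\langle e_1,C^se_1\rangle .
\]
For $B$ use the rank-one blow-up $B=T_0+Mww^*$ with $w=e_2+e_3$, so that $w\perp e_1$. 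Let
\[
T_0=I+b(e_1e_2^*+e_2e_1^*)+b'(e_1e_3^*+e_3e_1^*),
\]
with $b,b'$ small real numbers. Then $T_0>0$, and hence $B\in\mathbb P_3$ for every $M>0$.

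\textbf{Asymptotics of $B$.} By Lemma~\ref{lem:rank-one-fixed} with $x=e_1$, $h=\langle e_1,T_0w\rangle=b+b'$ and $d=2$,
\[
\langle e_1,B^se_1\rangle=|b+b'|^2\,2^{s-3}M^{s-2}(1+o(1)).
\]

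\textbf{Asymptotics of $C$.} Since $A$ is diagonal,
\[
C=A^{1/2}T_0A^{1/2}+Mvv^*,\qquad v=A^{1/2}w=(0,\sqrt{a_2},\sqrt{a_3})\perp e_1 .
\]
Applying Lemma~\ref{lem:rank-one-fixed} again, now with $T_0$ replaced by $A^{1/2}T_0A^{1/2}\geq0$ and $w$ replaced by $v$, gives
\[
h'=\langle e_1,A^{1/2}T_0Av\rangle\cdot\text{(up to the factor }A^{1/2}\text{ on }e_1)
= b\,a_2+b'a_3 ,
\]
more precisely $h'=\langle e_1,A^{1/2}T_0A^{1/2}v\rangle=b\,a_2+b'a_3$, and $d'=\|v\|^2=a_2+a_3$. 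Hence
\[
\langle e_1,C^se_1\rangle=|b\,a_2+b'a_3|^2(a_2+a_3)^{s-3}M^{s-2}(1+o(1)).
\]

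\textbf{Choice of parameters.} Both sides grow like $M^{s-2}$, so the comparison comes down to the leading constants. Choose $b'=-b+\eta$ with $b>0$ fixed and small. Then:
\begin{itemize}[label=--]
\item $h=\eta$, so the left constant is $\eta^2\,2^{s-3}$;
\item $h'=b(a_2-a_3)+\eta a_3$, which tends to $b(a_2-a_3)\neq0$ as $\eta\to0$.
\end{itemize}
For $\eta>0$ small enough the right leading constant strictly exceeds the left one. Taking $M$ large then gives \eqref{eq:monotone-failure}. Since $s>2$, the $O(1)$ remainders from the lemma are swamped by the $M^{s-2}$ terms.

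\textbf{Main obstacle.} The delicate point is ensuring that the cancellation making $h$ small survives the sandwiching by $A^{1/2}$. This is exactly why $w$ must straddle two distinct eigenvalues $a_2\neq a_3$ of $A$. If instead $Aw$ were proportional to $w$, the right constant would be the left one multiplied by a power of $a_2<1$, which would not produce a violation.

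The remaining checks are routine:
\begin{itemize}[label=--]
\item positive definiteness of $T_0$ for small $b,\eta$, by diagonal dominance;
\item the hypotheses of the lemma, namely $h\neq0$ and $h'\neq0$.
\end{itemize}
If one prefers a fully explicit one-parameter family, fix numerical values, for example $a_2=1/2$, $a_3=1/4$, $b=1/10$, and an $s$-dependent $\eta$, and let $M$ be the free parameter.
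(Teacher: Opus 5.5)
Your proof is correct and uses the paper's mechanism: the weight $f(A)$ is a multiple of $e_1e_1^*$, $B$ gets a rank-one blow-up along a direction orthogonal to $e_1$ that straddles two distinct eigenvalues of $A$, and Lemma~\ref{lem:rank-one-fixed} is applied to $C$ (your algebra for $h=\eta$, $h'=b(a_2-a_3)+\eta a_3$, $d=2$ and $d'=a_2+a_3$ checks out). The only difference is your perturbation $\eta>0$, which makes both sides grow like $M^{s-2}$ so you must compare leading constants; the paper in effect takes your $\eta=0$ case, where the blow-up direction is an eigenvector of $T_0$, so that $\langle e_1,B^se_1\rangle=\langle e_1,T_0^se_1\rangle$ is independent of $M$, the lemma is needed only once, and the right side simply diverges.
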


\begin{proof}
	Fix
	\[
	0<b<a<1,
	\qquad
	\tau>0,
	\qquad
	\eta>\tau^2,
	\]
	and set
	$
	A=\diag(1,a,b).
	$
	Let
	\[
	u=\frac{e_2+e_3}{\sqrt2},
	\qquad
	v=\frac{e_2-e_3}{\sqrt2},
	\]
	and let $U$ be the unitary with columns $e_1,u,v$. Define
	\[
	B_M
	=U
	\begin{pmatrix}
		1&\tau&0\\
		\tau&\eta&0\\
		0&0&M
	\end{pmatrix}
	U^*,
	\qquad M>0.
	\]
	Since $\eta>\tau^2$, we have $B_M>0$.
	
	Choose a nonnegative nondecreasing continuous function $f$ such that
	$f(b)=f(a)=0$ and $f(1)=1$; for instance, one may take
	$f(t)=\max\{0,(t-a)/(1-a)\}$ on $[b,1]$. Then $f(A)=P$, where $P=e_1e_1^*$.
	In the basis $\{e_1,u,v\}$, $B_M$ is the direct sum of
	\[
	K=\begin{pmatrix}1&\tau\\ \tau&\eta\end{pmatrix}
	\]
	and the scalar $M$. Hence
	\begin{equation}\label{eq:monotone-left-bounded}
		\Tr PA^sB_M^s
		=\langle e_1,B_M^s e_1\rangle
		=\langle e_1,K^s e_1\rangle,
	\end{equation}
	which is independent of $M$.
	
	In the same basis,
	\[
	U^*A^{1/2}U
	=
	\begin{pmatrix}
		1&0&0\\
		0&\alpha&\beta\\
		0&\beta&\alpha
	\end{pmatrix},
	\qquad
	\alpha=\frac{\sqrt a+\sqrt b}{2},
	\quad
	\beta=\frac{\sqrt a-\sqrt b}{2}.
	\]
	Since $a\neq b$, $\beta\neq0$. Therefore
	\[
	U^*A^{1/2}B_MA^{1/2}U=C_0+Mww^*,
	\qquad
	w=(0,\beta,\alpha)^T,
	\]
	where $C_0$ is independent of $M$. Moreover,
	\[
	d:=\|w\|^2=\frac{a+b}{2},
	\qquad
	\langle e_1,C_0w\rangle=2\tau\alpha\beta\neq0.
	\]
	By Lemma \ref{lem:rank-one-fixed},
	\[
	\Tr P(A^{1/2}B_MA^{1/2})^s
	=\langle e_1,(C_0+Mww^*)^s e_1\rangle
	\longrightarrow +\infty
	\]
	as $M\to\infty$, because $s>2$. This diverges while
	\eqref{eq:monotone-left-bounded} is bounded. Thus
	\eqref{eq:monotone-failure} holds for all sufficiently large $M$.
	
	If one requires the weight to be strictly positive, fix such an $M$ and replace
	$f$ by $f_\delta=f+\delta$. The difference between the two sides of
	\eqref{eq:monotone-failure} depends continuously on $\delta$, and it is already
	negative at $\delta=0$. Hence the failure persists for all sufficiently small
	$\delta>0$.
\end{proof}

The preceding counterexample also disproves the two $s>2$ trace conjectures
formulated by Shi-Wei-Wang.

\begin{corollary}\label{cor:SWW-trace-conjectures-false}
	For every $s>2$, Conjectures 3.2 and 3.3 of Shi-Wei-Wang
	\cite{ShiWeiWang} fail already for $3\times3$ positive definite matrices.
\end{corollary}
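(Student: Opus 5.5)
The plan is to reduce both conjectures to the configuration already built in the proof of Theorem~\ref{thm:monotone-counter}, with a suitable choice of weight in each case. Conjecture~3.2 of Shi--Wei--Wang is the $s>2$ case of the reverse monotone-weight inequality \eqref{eq:LC-conj1}: for nonnegative nondecreasing $f$,
\[
\Tr f(A)A^sB^s\geq\Tr f(A)(A^{1/2}BA^{1/2})^s .
\]
Theorem~\ref{thm:monotone-counter} already exhibits $A=\diag(1,a,b)$, $B_M\in\mathbb P_3$ and a continuous nondecreasing $f\geq0$ (even strictly positive after the $\delta$-perturbation) that violate it for every $s>2$. So for Conjecture~3.2 I would only check that its hypotheses on the weight (continuity and positivity on an interval containing $\spec(A)$) are met by $f_\delta$, and then quote the theorem.

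For the dual Conjecture~3.3, the hypothesis is that $x\mapsto x^sg(x)$ is nonnegative and nonincreasing. I expect it to assert an inequality of the shape
\[
\Tr g(A)A^sB^s\;\lessgtr\;\Tr g(A)(A^{1/2}BA^{1/2})^s ,
\]
and the first step is to pin down its exact direction. The natural choice is a weight concentrated on the \emph{smallest} eigenvalue of $A$, rather than the largest. The plan is:
\begin{itemize}
\item Reuse the construction with the diagonal entries reordered, $A=\diag(b',1,1/2)$ say, so that the isolated coordinate $e_1$ now carries the smallest eigenvalue $b'$.
\item Choose $g$ with $x^sg(x)$ nonincreasing, equal to $1$ at $x=b'$ and to $0$ at the two larger eigenvalues. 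For example, take $x^sg(x)=\max\{0,(c-x)/(c-b')\}$ with $b'<c\leq 1/2$, so that $g(A)=(b')^{-s}P$ with $P=e_1e_1^*$.
\item Keep $B_M$ built from the same block $K$ on $\mathrm{span}\{e_1,u\}$ and the blow-up $Mvv^*$, with $u,v$ formed from $e_2,e_3$.
\end{itemize}
As in \eqref{eq:monotone-left-bounded}, $\Tr g(A)A^sB_M^s$ is then independent of $M$. For the other side, $U^*A^{1/2}U$ again has the $2\times2$ block with $\beta\neq0$, because the eigenvalues $1$ and $1/2$ are distinct. Also $\langle e_1,C_0w\rangle$ is a nonzero multiple of $\tau\alpha\beta$, since $e_1$ now carries the factor $\sqrt{b'}$. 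Lemma~\ref{lem:rank-one-fixed} then forces
\[
\Tr g(A)(A^{1/2}B_MA^{1/2})^s\to\infty \qquad (M\to\infty).
\]

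The main obstacle is matching directions. If Conjecture~3.3 asserts $\Tr g(A)A^sB^s\geq\Tr g(A)(A^{1/2}BA^{1/2})^s$, the blow-up above contradicts it directly. If it asserts the opposite inequality, I would instead need a construction in which the left side $\Tr g(A)A^sB^s$ diverges faster than the right side. In that case I would try putting the large direction of $B$ along $e_1$ itself. Then $\langle e_1,B^se_1\rangle\sim M^s$, while after sandwiching the weight on $e_1$ is damped by $A$, and the top-eigenvalue asymptotics would be compared exactly as in the proof of Lemma~\ref{lem:rank-one-fixed}. Strict positivity of $g$, if the conjecture demands it, would be handled by the same continuity-in-$\delta$ perturbation used at the end of the proof of Theorem~\ref{thm:monotone-counter}, while keeping $x^s(g+\delta)(x)$ nonincreasing. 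Achieving that may require adding $\delta x^{-s}$ rather than $\delta$, which is still nonnegative and keeps the product nonincreasing.
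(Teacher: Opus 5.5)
Your treatment of Conjecture~3.2 is the same as the paper's. The gap is in Conjecture~3.3. You leave its direction open, and in the direction it actually has, neither of your constructions gives a counterexample.

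\textbf{The direction is forced.} The weight $g(x)=x^{-s}$ is admissible, since $x^sg(x)\equiv1$. With $C=A^{1/2}BA^{1/2}$, the Araki--Lieb--Thirring inequality for the pair $(A^{-1},C)$ gives $\Tr g(A)A^sB^s=\Tr(A^{-1/2}CA^{-1/2})^s\leq\Tr A^{-s}C^s=\Tr g(A)(A^{1/2}BA^{1/2})^s$, generically strictly. So Conjecture~3.3 asserts $\Tr g(A)A^sB^s\leq\Tr g(A)(A^{1/2}BA^{1/2})^s$; this is also the direction the paper's proof contradicts.

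\textbf{Your main construction.} It makes the right-hand side diverge while the left-hand side stays bounded. That is exactly the conjectured sign, so it is not a counterexample.

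\textbf{Your fallback.} Take $g(A)=(b')^{-s}e_1e_1^*$, $B=Me_1e_1^*+B_0$, and $D=(b')^{-1/2}A^{1/2}=\diag(1,d_2,d_3)$ with $d_2,d_3>1$. The two sides are $\langle e_1,B^se_1\rangle$ and $\langle e_1,(DBD)^se_1\rangle$. Both equal $M^s+s(B_0)_{11}M^{s-1}+O(M^{s-2})$. Second-order perturbation of the top eigenpair shows that their $M^{s-2}$ coefficients differ by $(s-1)\sum_{j\geq2}(d_j^2-1)|(B_0)_{j1}|^2\geq0$, in favour of the sandwiched side. This is again the conjectured sign.

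\textbf{The missing idea is the involution the paper uses.} Set $\widehat A=A^{-1}$, $\widehat B=A^{1/2}BA^{1/2}$ and $g(x)=x^{-s}f(x^{-1})$. Then:
\begin{itemize}
\item $x^sg(x)=f(x^{-1})$ is nonnegative and nonincreasing;
\item $\widehat A^{1/2}\widehat B\widehat A^{1/2}=B$ and $g(\widehat A)=A^sf(A)$;
\item consequently the two traces are exchanged: $\Tr g(\widehat A)\widehat A^s\widehat B^s=\Tr f(A)(A^{1/2}BA^{1/2})^s$ and $\Tr g(\widehat A)(\widehat A^{1/2}\widehat B\widehat A^{1/2})^s=\Tr f(A)A^sB^s$.
\end{itemize}
The strict inequality of Theorem~\ref{thm:monotone-counter} therefore transfers verbatim, with no new asymptotics.

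In your direct language: you were right to put the weight on the smallest eigenvalue. Here $\widehat A=\diag(1,1/a,1/b)$ and $g(\widehat A)=e_1e_1^*$. But the rank-one blow-up must be coupled to $e_1$ \emph{before} sandwiching and decoupled from it \emph{after}, that is, $\widehat B=\widehat A^{-1/2}B_M\widehat A^{-1/2}$. Then the unsandwiched side $\langle e_1,\widehat B^se_1\rangle$ grows like $M^{s-2}$ by Lemma~\ref{lem:rank-one-fixed}. The sandwiched side stays equal to $\langle e_1,K^se_1\rangle$.
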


\begin{proof}
	Conjecture 3.2 is exactly contradicted by
	Theorem~\ref{thm:monotone-counter}.
	
	We now disprove Conjecture 3.3. Let $A,B>0$ and $f\geq0$ be the counterexample
	from Theorem~\ref{thm:monotone-counter}, so that
	\[
	\Tr f(A)A^sB^s
	<
	\Tr f(A)(A^{1/2}BA^{1/2})^s .
	\]
	Set
	\[
	\widehat A=A^{-1},
	\qquad
	\widehat B=A^{1/2}BA^{1/2},
	\qquad
	g(x)=x^{-s}f(x^{-1}).
	\]
	Then
	$
	x^s g(x)=f(x^{-1})
	$
	is nonnegative and nonincreasing on the relevant spectral interval. Moreover,
	$
	\widehat A^{1/2}\widehat B\widehat A^{1/2}=B
	$
	and
	$
	g(\widehat A)=g(A^{-1})=A^s f(A).
	$
	Therefore
	\[
	\begin{aligned}
		\Tr g(\widehat A)
		(\widehat A^{1/2}\widehat B\widehat A^{1/2})^s
		&=
		\Tr A^sf(A)B^s
		=
		\Tr f(A)A^sB^s,\\
		\Tr g(\widehat A)\widehat A^s\widehat B^s
		&=
		\Tr A^sf(A)A^{-s}(A^{1/2}BA^{1/2})^s\\
		&=
		\Tr f(A)(A^{1/2}BA^{1/2})^s.
	\end{aligned}
	\]
	Thus the asserted inequality in Conjecture 3.3 would give the opposite of the
	strict counterexample above. Hence Conjecture 3.3 is false.
\end{proof}

The preceding counterexamples cannot be repaired by a constant depending only on
$s$ and on the spectral ratio of $A$.

\begin{corollary}\label{cor:no-kantorovich-monotone}
	Fix $s>2$ and $0<\kappa<1$. Then
	\[
	\sup
	\frac{
		\Tr f(A)(A^{1/2}BA^{1/2})^s
	}{
		\Tr f(A)A^sB^s
	}
	=+\infty,
	\]
	where the supremum is taken over all $A,B\in\mathbb P_3$ satisfying
	$
	\frac{\lambda_{\min}(A)}{\lambda_{\max}(A)}=\kappa,
	$
	and over all nonzero nonnegative nondecreasing functions $f$ on an interval
	containing $\spec(A)$ for which the denominator is positive. Consequently, no
	finite constant depending only on $s$ and on the spectral ratio
	$\lambda_{\min}(A)/\lambda_{\max}(A)$ can restore the monotone-weight inequality
	for all such weights.
\end{corollary}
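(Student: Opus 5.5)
The plan is to reuse the one-parameter family from the proof of Theorem~\ref{thm:monotone-counter}, now with the spectral ratio held fixed. Take $A=\diag(1,a,b)$ with $0<b<a<1$ and $b=\kappa$, so that $\lambda_{\min}(A)/\lambda_{\max}(A)=b/1=\kappa$. For example, set $a=(1+\kappa)/2$, which gives $\kappa<a<1$. Fix $\tau>0$ and $\eta>\tau^2$, and define $B_M\in\mathbb P_3$ exactly as before. For the weight we again use $f(t)=\max\{0,(t-a)/(1-a)\}$ on $[b,1]$. It is nonzero, nonnegative and nondecreasing, and $f(A)=P=e_1e_1^*$.

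First we check that the denominator is positive and independent of $M$. By \eqref{eq:monotone-left-bounded}, $\Tr PA^sB_M^s=\langle e_1,K^se_1\rangle$ with $K=\begin{pmatrix}1&\tau\\ \tau&\eta\end{pmatrix}>0$. Because $K^s$ is positive definite, its $(1,1)$ entry is strictly positive. So the denominator is a fixed positive constant $c_0$, depending only on $s,\tau,\eta$. This is the only point not already contained in the earlier proof, and it is immediate.

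Next, the numerator. As in the proof of Theorem~\ref{thm:monotone-counter}, $U^*A^{1/2}B_MA^{1/2}U=C_0+Mww^*$, where $w=(0,\beta,\alpha)^T$ and $\langle e_1,C_0w\rangle=2\tau\alpha\beta\neq0$, since $a\neq b$. Here $x=e_1$ is orthogonal to $w$. Lemma~\ref{lem:rank-one-fixed} then gives
\[
\Tr P(A^{1/2}B_MA^{1/2})^s=4\tau^2\alpha^2\beta^2 d^{s-3}M^{s-2}(1+o(1))\longrightarrow+\infty
\]
as $M\to\infty$, because $s>2$. The ratio is therefore at least $c_0^{-1}$ times a quantity that tends to infinity, which shows the supremum is $+\infty$.

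The final statement follows directly. If there were a finite constant $C(s,\kappa)$ with $\Tr f(A)(A^{1/2}BA^{1/2})^s\le C\,\Tr f(A)A^sB^s$ for all admissible data, the supremum above would be at most $C$, a contradiction. The only care needed is that the spectral ratio constraint does not interfere with the construction. It does not: the construction needs only $b<a<1$, and $M$ alters $B$ alone, never $A$.

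If strictly positive weights are also wanted, one can perturb $f$ to $f+\delta$ for fixed large $M$, exactly as in the earlier proof. This is not needed, since the supremum is taken over nonnegative $f$.
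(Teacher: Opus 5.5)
Your proposal is correct and is essentially the paper's proof. You fix $b=\kappa<a<1$, reuse $B_M$ and the weight with $f(A)=e_1e_1^*$ from Theorem~\ref{thm:monotone-counter}, observe that the denominator $\langle e_1,K^se_1\rangle$ does not depend on $M$, and apply Lemma~\ref{lem:rank-one-fixed} to make the numerator diverge; your explicit check that this denominator is strictly positive, so the data are admissible, is a detail the paper leaves implicit.
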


\begin{proof}
	Choose $b=\kappa$ and any $a$ with $\kappa<a<1$. Use the matrices $A$ and
	$B_M$ and the weight $f$ from the proof of
	Theorem~\ref{thm:monotone-counter}. Then
	\[
	\lambda_{\max}(A)=1,
	\qquad
	\lambda_{\min}(A)=\kappa.
	\]
	Moreover,
	\[
	\Tr f(A)A^sB_M^s
	=
	\langle e_1,B_M^s e_1\rangle
	\]
	is independent of $M$, while Lemma~\ref{lem:rank-one-fixed} gives
	\[
	\Tr f(A)(A^{1/2}B_MA^{1/2})^s
	\geq cM^{s-2}(1+o(1))
	\]
	for some $c>0$. Hence the displayed ratio tends to $+\infty$ as
	$M\to\infty$.
\end{proof}

The same unboundedness persists if the weights are required to be strictly
positive, provided that no uniform positive lower bound is imposed. Indeed, the
weight is allowed to vary with \(M\), because the supremum in
Corollary~\ref{cor:no-kantorovich-monotone} is taken jointly over \(A,B\) and
over all admissible weights. In the counterexample above replace \(f\) by
$
f_M=f+M^{-3}.
$
Then \(f_M\) is strictly positive and nondecreasing. The additional identity-weight part contributes at
most $O(M^{s-3})$ to the denominator, whereas the projection part of the
numerator is of order $M^{s-2}$. Hence the ratio remains unbounded as
$M\to\infty$.

The next estimate is the scaled version of the preceding rank-one blow-up
mechanism. It will provide the sharp obstruction for the power-weight problem.

\begin{lemma}\label{lem:scaled-blowup}
	Let $w,z\in\bbC^2$ with $w\neq0$ and $z^*w\neq0$. Let $a_\eps\in\mathbb R$ and
	$S_\eps\in\overline{\mathbb P}_2$ be uniformly bounded as $\eps\downarrow0$.
	For $t>0$, set
	\[
	T_{\eps,t}
	=
	\begin{pmatrix}
		a_\eps&\sqrt\eps\, z^*\\
		\sqrt\eps\, z&S_\eps+tww^*
	\end{pmatrix},
	\]
	and assume that $T_{\eps,t}\geq0$ for the values of $\eps$ and $t$ under
	consideration. Put $d=\|w\|^2$. If $t=t_\eps\to\infty$ as $\eps\downarrow0$,
	then the largest eigenvalue $\mu_{\eps,t}$ of $T_{\eps,t}$ satisfies
	\[
	\mu_{\eps,t}=dt+O(1).
	\]
	Moreover, if $q_{\eps,t}$ is a corresponding unit eigenvector, then
	\begin{equation}\label{eq:scaled-component}
		|\langle e_1,q_{\eps,t}\rangle|^2
		=
		\frac{\eps |z^*w|^2}{d^3t^2}(1+o(1)).
	\end{equation}
	Consequently, for every $s>2$,
	\begin{equation}\label{eq:scaled-s-power}
		\langle e_1,T_{\eps,t}^s e_1\rangle
		\geq c\,\eps t^{s-2}(1+o(1))
	\end{equation}
	for some constant $c>0$ independent of $\eps$ and $t$.
\end{lemma}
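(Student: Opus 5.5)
Write $T=T_{\eps,t}$ as a bounded part plus a rank-one part, and then follow the proof of Lemma~\ref{lem:rank-one-fixed} while keeping track of $\eps$. Put $\widetilde w=(0,w)^T\in\bbC^3$ and
\[
T=R_\eps+t\,\widetilde w\widetilde w^*,
\qquad
R_\eps=\begin{pmatrix}a_\eps&\sqrt\eps\,z^*\\ \sqrt\eps\,z&S_\eps\end{pmatrix}.
\]
$R_\eps$ is uniformly bounded. Weyl's inequalities then give $\mu_{\eps,t}=dt+O(1)$, and show that the two remaining eigenvalues of $T$ stay in a fixed bounded interval $[0,K]$ ($T\geq0$ by hypothesis). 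Because of the spectral gap, which is of order $t\to\infty$, the Davis--Kahan $\sin\Theta$ theorem yields $q_{\eps,t}=\widetilde w/\sqrt d+O(1/t)$ after a choice of phase. In particular $\langle\widetilde w,q_{\eps,t}\rangle=\sqrt d\,(1+O(1/t))$.

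For the component estimate, take the first row of the eigenvalue equation $Tq=\mu q$. Since $\langle e_1,\widetilde w\rangle=0$, the rank-one term drops out:
\[
(\mu_{\eps,t}-a_\eps)\,q_1=\sqrt\eps\,z^*q',
\qquad q=(q_1,q')^T.
\]
Here $q'=w/\sqrt d+O(1/t)$, and $z^*w\neq0$ is fixed, so $z^*q'=z^*w/\sqrt d+O(1/t)=(z^*w/\sqrt d)(1+o(1))$. Also $\mu_{\eps,t}-a_\eps=dt(1+o(1))$. Dividing and squaring gives \eqref{eq:scaled-component}. The only point to check is that every error term is uniform in $\eps$. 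This holds because $R_\eps$ is bounded uniformly and $w,z$ do not depend on $\eps$; the off-diagonal $\sqrt\eps$ only makes $R_\eps$ smaller.

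For \eqref{eq:scaled-s-power}, expand spectrally: $\langle e_1,T^se_1\rangle=\sum_j\mu_j^s|\langle e_1,q_j\rangle|^2$. All terms are nonnegative because $T\geq0$, so the sum is at least the top term. That term is
\[
\mu_{\eps,t}^s|q_1|^2=(dt)^s\,\frac{\eps|z^*w|^2}{d^3t^2}(1+o(1))
=|z^*w|^2d^{s-3}\,\eps\,t^{s-2}(1+o(1)).
\]
So the lemma holds with $c=|z^*w|^2d^{s-3}>0$. Unlike in Lemma~\ref{lem:rank-one-fixed}, we do not need to show that the other eigenvalues contribute only $O(1)$: we want a lower bound, and those contributions are nonnegative. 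This matters because that $O(1)$ need not be negligible against $\eps t^{s-2}$.

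The main obstacle is making the eigenvector perturbation uniform as $\eps\downarrow0$, with $t=t_\eps$ coupled to $\eps$. I would handle this with an explicit gap argument rather than an abstract limit. Since $\|R_\eps\|\leq K$, the gap between $\mu_{\eps,t}$ and the rest of the spectrum is at least $dt-2K$. Davis--Kahan then gives $\|q-\widetilde w/\sqrt d\|\leq 2K/(dt-2K)$, with constants independent of $\eps$. This gives all the $o(1)$ terms above as $t\to\infty$.
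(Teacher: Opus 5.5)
Your proposal is correct and follows the paper's proof essentially step by step: the same splitting $T=R_\eps+t\,\widetilde w\widetilde w^*$, Weyl/min--max to get $\mu_{\eps,t}=dt+O(1)$, an $O(1/t)$ eigenvector estimate uniform in $\eps$, the first row of the eigenvalue equation (where the rank-one term drops out), and the lower bound by the top spectral term using $T\geq0$. The only difference is how you get the eigenvector estimate: you cite Davis--Kahan, whereas the paper proves the same bound directly by projecting the eigenvalue equation onto $(\mathbb C\widehat w)^\perp$ and inverting $\mu_{\eps,t}I-P^\perp D_\eps P^\perp$, whose inverse has norm $O(t^{-1})$.
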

\begin{proof}
	Let
	\[
	\widehat w=(0,w)^T,
	\qquad
	D_\eps=
	\begin{pmatrix}
		a_\eps&\sqrt\eps\, z^*\\
		\sqrt\eps\, z&S_\eps
	\end{pmatrix}.
	\]
	Then \(T_{\eps,t}=D_\eps+t\widehat w\widehat w^*\) and
	\(\|D_\eps\|\le C\) uniformly for small \(\eps\). The rank-one matrix
	\(\widehat w\widehat w^*\) has the simple nonzero eigenvalue
	\(d=\|w\|^2\). By the min-max principle, and since $\|D_\eps\|\leq C$, the largest eigenvalue
	belongs to $[dt-C,dt+C]$, while all the remaining eigenvalues belong to
	$[-C,C]$. Hence, for
	\(t\) sufficiently large, this is the unique eigenvalue separated from the
	remaining spectrum. Therefore, the spectral gap between the
	large eigenvalue and the rest of the spectrum is \(dt+O(1)\), uniformly in
	\(\eps\). In particular,
	\[
	\mu_{\eps,t}=dt+O(1).
	\]
	Let \(P\) be the orthogonal projection onto \(\mathbb C\widehat w\). Projecting
	the eigenvalue equation onto \(P^\perp\), we get
	\[
	\bigl(\mu_{\eps,t}I-P^\perp D_\eps P^\perp\bigr)P^\perp q_{\eps,t}
	=
	P^\perp D_\eps Pq_{\eps,t}.
	\]
	Since \(\mu_{\eps,t}=dt+O(1)\) and \(\|D_\eps\|\le C\), the operator on the
	left is invertible for large \(t\), and its inverse has norm \(O(t^{-1})\),
	uniformly in \(\eps\). Hence
	\[
	\|P^\perp q_{\eps,t}\|=O(t^{-1}).
	\]
	After choosing the phase of \(q_{\eps,t}\), we may therefore write
	\[
	q_{\eps,t}=(x_{\eps,t},y_{\eps,t}),
	\qquad
	y_{\eps,t}\to \frac{w}{\sqrt d}
	\]
	uniformly along every path with \(t=t_\eps\to\infty\).
	
	The first row of the eigenvalue equation gives
	\[
	(\mu_{\eps,t}-a_\eps)x_{\eps,t}
	=
	\sqrt\eps\, z^*y_{\eps,t}.
	\]
	Since \(z^*w\ne0\) and \(\mu_{\eps,t}=dt(1+o(1))\), we obtain
	\[
	x_{\eps,t}
	=
	\frac{\sqrt\eps\, z^*w}{d^{3/2}t}(1+o(1)),
	\]
	which proves \eqref{eq:scaled-component}. Because \(T_{\eps,t}\ge0\), the
	contribution of the largest eigenvalue to the spectral decomposition gives
	\[
	\langle e_1,T_{\eps,t}^s e_1\rangle
	\ge
	\mu_{\eps,t}^s|x_{\eps,t}|^2
	=
	\eps |z^*w|^2d^{s-3}t^{s-2}(1+o(1)).
	\]
	After decreasing the constant if necessary, this proves
	\eqref{eq:scaled-s-power} with a constant independent of \(\eps\) and \(t\).
\end{proof}

For $r,s\geq0$ and $A,B\geq0$, set
\begin{equation}\label{eq:Phi}
	\Phi_{r,s}(A,B)
	:=
	\Tr A^{r+s}B^s
	-
	\Tr A^r(A^{1/2}BA^{1/2})^s.
\end{equation}

The next theorem gives the necessity part of the sharp phase diagram: beyond the
hyperbola $r=s/(s-2)$, even the trace inequality fails.

\begin{theorem}\label{thm:power-counter}
	Let $s>2$ and assume that
	\begin{equation}\label{eq:threshold-condition}
		r>\frac{s}{s-2}.
	\end{equation}
	Then there exist $A_\eps,B_\eps\in\mathbb P_3$ such that
	\[
	\Phi_{r,s}(A_\eps,B_\eps)<0
	\]
	for all sufficiently small $\eps>0$. In fact,
	\[
	\Phi_{r,s}(A_\eps,B_\eps)\to -\infty
	\qquad (\eps\downarrow0).
	\]
\end{theorem}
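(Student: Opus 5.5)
The plan is to rescale the $3\times3$ construction of Theorem~\ref{thm:monotone-counter} so that it fits Lemma~\ref{lem:scaled-blowup}, and to drive it with two parameters $\eps\downarrow0$ and $M=\eps^{-\gamma}$. The exponent $\gamma$ is then tuned against the hyperbola.

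\textbf{Construction.} Fix $0<b<a$, $\tau>0$, $\eta>\tau^2$, and put
\[
A_\eps=\diag(1,\eps a,\eps b).
\]
Let $u,v,U$ be as in the proof of Theorem~\ref{thm:monotone-counter}. Set
\[
B_\eps=U\bigl(K\oplus M\bigr)U^*,\qquad K=\begin{pmatrix}1&\tau\\ \tau&\eta\end{pmatrix},\qquad M=\eps^{-\gamma},
\]
where $\gamma>1$ will be chosen later. Both matrices are positive definite.

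\textbf{Step 1: the sandwiched term.} In the basis $\{e_1,u,v\}$, $U^*A_\eps^{1/2}U$ is the direct sum of $1$ and $\sqrt\eps\,R$, where
\[
R=\begin{pmatrix}\alpha&\beta\\ \beta&\alpha\end{pmatrix},
\]
with $\alpha,\beta$ as before but built from $\sqrt a,\sqrt b$. Hence $U^*A_\eps^{1/2}B_\eps A_\eps^{1/2}U$ has exactly the form $T_{\eps,t}$ of Lemma~\ref{lem:scaled-blowup}, with:
\begin{itemize}
\item $a_\eps=1$;
\item $z=R(\tau,0)^T$;
\item $S_\eps=R\,\diag(\eta,0)\,R$;
\item $w=(\beta,\alpha)^T$;
\item $t=\eps M=\eps^{1-\gamma}\to\infty$.
\end{itemize}
One checks that $z^*w=2\tau\alpha\beta\neq0$, as in Theorem~\ref{thm:monotone-counter}.

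Since $e_1e_1^*\le A_\eps^r$, the elementary trace fact from the introduction gives
\[
\Tr A_\eps^r(A_\eps^{1/2}B_\eps A_\eps^{1/2})^s\ \ge\ \langle e_1,(\cdot)^se_1\rangle\ \ge\ c\,\eps t^{s-2}(1+o(1)) = c\,\eps^{\,s-1-\gamma(s-2)}(1+o(1)).
\]

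\textbf{Step 2: the unsandwiched term.} Write
\[
\Tr A_\eps^{r+s}B_\eps^s=\langle e_1,B_\eps^se_1\rangle+\eps^{r+s}\bigl(a^{r+s}(B_\eps^s)_{22}+b^{r+s}(B_\eps^s)_{33}\bigr).
\]
The first term equals $\langle e_1,K^se_1\rangle$ and is independent of $\eps$. The second is at most $C\eps^{r+s}M^s=C\eps^{\,r+s-\gamma s}$, because $\|B_\eps\|=O(M)$.

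\textbf{Step 3: choosing $\gamma$.} We need the exponent of the lower bound in Step 1 to be negative and strictly smaller than the exponent in Step 2. Then $\Phi_{r,s}\to-\infty$. These requirements read
\[
\gamma>\frac{s-1}{s-2}\qquad\text{and}\qquad s-1-\gamma(s-2)<r+s-\gamma s\iff \gamma<\frac{r+1}{2}.
\]
An admissible $\gamma$ exists, and automatically satisfies $\gamma>1$, exactly when
\[
\frac{s-1}{s-2}<\frac{r+1}{2},
\]
which is equivalent to $r>s/(s-2)$. This is hypothesis \eqref{eq:threshold-condition}. With such a $\gamma$, the negative sandwiched term dominates both the bounded first term and the $O(\eps^{r+s-\gamma s})$ term, so $\Phi_{r,s}(A_\eps,B_\eps)\to-\infty$.

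\textbf{Main obstacle.} The delicate point is verifying that Lemma~\ref{lem:scaled-blowup} applies uniformly along the coupled path $t=\eps^{1-\gamma}$. Concretely, $a_\eps$ and $S_\eps$ must be bounded, the off-diagonal block must scale exactly as $\sqrt\eps\,z^*$, and $z^*w\neq0$. Once this is checked, the rest is the exponent bookkeeping above. That bookkeeping is precisely where the hyperbola $r=s/(s-2)$ appears.
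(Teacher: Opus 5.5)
Your proposal is correct and follows essentially the same route as the paper: the same $A_\eps=\diag(1,\eps a,\eps b)$ and $B_\eps=U(K\oplus\eps^{-\gamma})U^*$, the same reduction of the sandwiched term to Lemma~\ref{lem:scaled-blowup} with $t_\eps=\eps^{1-\gamma}$, $w=(\beta,\alpha)^T$, $z=\tau(\alpha,\beta)^T$, and the same exponent window $\frac{s-1}{s-2}<\gamma<\frac{r+1}{2}$. One small slip: the lower-right block of $U^*A_\eps^{1/2}B_\eps A_\eps^{1/2}U$ is $\eps R\,\diag(\eta,M)\,R$, so $S_\eps=\eps R\,\diag(\eta,0)\,R$ rather than $R\,\diag(\eta,0)\,R$; this is harmless because the lemma only needs $S_\eps$ to be bounded and positive semidefinite.
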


\begin{proof}
	Choose $0<b<a<1$, $\tau>0$, $\eta>\tau^2$, and the same unitary $U$ as in the
	proof of Theorem \ref{thm:monotone-counter}. Since \eqref{eq:threshold-condition}
	is equivalent to
	\[
	\frac{s-1}{s-2}<\frac{r+1}{2},
	\]
	we can choose $\gamma$ such that
	\begin{equation}\label{eq:gamma}
		\frac{s-1}{s-2}<\gamma<\frac{r+1}{2}.
	\end{equation}
	Set
	\[
	A_\eps=\diag(1,\eps a,\eps b),
	\qquad
	M_\eps=\eps^{-\gamma},
	\]
	and
	\[
	B_\eps
	=U
	\begin{pmatrix}
		1&\tau&0\\
		\tau&\eta&0\\
		0&0&M_\eps
	\end{pmatrix}
	U^*.
	\]
	All matrices are positive definite.
	
	We estimate the first trace in the basis $\{e_1,u,v\}$. For every $t\geq0$,
	\[
	U^*A_\eps^tU=1\oplus \eps^t R_t,
	\]
	where $R_t$ is a fixed positive definite $2\times2$ matrix depending only on
	$a,b,t$. Also,
	\[
	U^*B_\eps^sU=K^s\oplus M_\eps^s,
	\qquad
	K=\begin{pmatrix}1&\tau\\ \tau&\eta\end{pmatrix}.
	\]
	Therefore
	\begin{equation}\label{eq:power-left}
		\Tr A_\eps^{r+s}B_\eps^s
		=O(1)+O(\eps^{r+s}M_\eps^s).
	\end{equation}
	
	Next set
	\[
	C_\eps:=U^*A_\eps^{1/2}B_\eps A_\eps^{1/2}U.
	\]
	With
	\[
	\alpha=\frac{\sqrt a+\sqrt b}{2},
	\qquad
	\beta=\frac{\sqrt a-\sqrt b}{2},
	\]
	one has
	\[
	U^*A_\eps^{1/2}U
	=
	\begin{pmatrix}
		1&0&0\\
		0&\sqrt\eps\,\alpha&\sqrt\eps\,\beta\\
		0&\sqrt\eps\,\beta&\sqrt\eps\,\alpha
	\end{pmatrix}.
	\]
	Consequently
	\[
	C_\eps
	=
	\begin{pmatrix}
		1 & \sqrt\eps\, z^*\\
		\sqrt\eps\, z & S_\eps+t_\eps ww^*
	\end{pmatrix},
	\]
	where \(S_\eps\ge0\) is uniformly bounded and
	\[
	t_\eps=\eps M_\eps=\eps^{1-\gamma},
	\qquad
	w=(\beta,\alpha)^T,
	\qquad
	z=\tau(\alpha,\beta)^T.
	\]
	Thus \(C_\eps\) has the form required in
	Lemma~\ref{lem:scaled-blowup}. Since \(a\neq b\) and \(\tau>0\),
	\[
	z^*w=2\tau\alpha\beta\neq0.
	\]
	The left inequality in \eqref{eq:gamma} implies $\gamma>1$, hence
	$t_\eps\to\infty$. Lemma \ref{lem:scaled-blowup} gives
	\begin{equation}\label{eq:power-right-lower}
		\langle e_1,C_\eps^s e_1\rangle
		\geq c\,\eps t_\eps^{s-2}(1+o(1))
	\end{equation}
	for some $c>0$.
	Since $U^*A_\eps^rU=1\oplus\eps^rR_r\geq e_1e_1^*$, we have
	\begin{equation}\label{eq:weighted-right-lower}
		\Tr A_\eps^r(A_\eps^{1/2}B_\eps A_\eps^{1/2})^s
		\geq c\,\eps t_\eps^{s-2}(1+o(1)).
	\end{equation}
	Combining \eqref{eq:power-left} and \eqref{eq:weighted-right-lower},
	\begin{equation}\label{eq:power-Phi-upper}
		\Phi_{r,s}(A_\eps,B_\eps)
		\leq
		O(1)+O(\eps^{r+s}M_\eps^s)
		-c\,\eps t_\eps^{s-2}(1+o(1)).
	\end{equation}
	Now
	\[
	\eps t_\eps^{s-2}
	=\eps^{s-1-\gamma(s-2)}\to\infty
	\]
	by the left inequality in \eqref{eq:gamma}. Moreover,
	\[
	\frac{\eps^{r+s}M_\eps^s}{\eps t_\eps^{s-2}}
	=\eps^{r+1-2\gamma}\to0
	\]
	by the right inequality in \eqref{eq:gamma}. Thus the negative term in
	\eqref{eq:power-Phi-upper} dominates, and
	$\Phi_{r,s}(A_\eps,B_\eps)\to-\infty$.
\end{proof}

Theorem~\ref{thm:power-counter} rules out the Liu-Cheng
log-majorization conjecture beyond the sharp hyperbola. Indeed, if
\eqref{eq:LC-conj2} held with $p=1$ and $q=s>2$, then, since
log-majorization implies the corresponding trace inequality, one would have
\[
\Tr A^{r+s}B^s\geq
\Tr A^r(A^{1/2}BA^{1/2})^s
\]
for all $A,B\geq0$. This contradicts Theorem~\ref{thm:power-counter} whenever
$r>s/(s-2)$.

\begin{corollary}\label{cor:LC2-false}
	The conjectured all-$r$ log-majorization \eqref{eq:LC-conj2} does not hold in
	the full stated range. More precisely,
	for every $q>2$ and every $r>q/(q-2)$, it fails with $p=1$ already for
	$3\times3$ positive definite matrices.
\end{corollary}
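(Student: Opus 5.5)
The plan is a direct reduction to Theorem~\ref{thm:power-counter}, through the fact recalled in the introduction that log-majorization implies the corresponding trace inequality. Fix $q>2$ and $r>q/(q-2)$, and specialize \eqref{eq:LC-conj2} to $p=1$. The condition $0<p\leq q$ then reads $1\leq q$, which holds, and $r\geq0$ holds as well. Thus the pair $(q,r)$ lies in the range where the conjecture claims
\[
A^{r+q}B^q\succ_{\log}A^r\bigl(A^{1/2}BA^{1/2}\bigr)^{q}
\]
for all positive semidefinite $A,B$, and in particular for all $A,B\in\mathbb P_3$.

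The first step is to make the passage from log-majorization to traces precise. For $A>0$, both products are similar to positive semidefinite matrices: $A^{(r+q)/2}B^qA^{(r+q)/2}$ and $A^{r/2}C^qA^{r/2}$, where $C=A^{1/2}BA^{1/2}$. Their eigenvalues are therefore nonnegative. Log-majorization of nonnegative eigenvalue vectors implies weak majorization, so the sums of all eigenvalues compare, which gives
\[
\Tr A^{r+q}B^q\geq\Tr A^r(A^{1/2}BA^{1/2})^q .
\]
In the notation \eqref{eq:Phi}, this says $\Phi_{r,q}(A,B)\geq0$ for all $A,B\in\mathbb P_3$.

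The second step is to apply Theorem~\ref{thm:power-counter} with $s=q>2$. Its hypothesis \eqref{eq:threshold-condition} is exactly $r>q/(q-2)$, so it produces $A_\eps,B_\eps\in\mathbb P_3$ with $\Phi_{r,q}(A_\eps,B_\eps)<0$ for small $\eps$. This contradicts the trace inequality from the first step, so the log-majorization fails at this very pair of $3\times3$ positive definite matrices.

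There is no genuine obstacle here. The only point needing care is the interpretation of the non-Hermitian products through their spectra, and the similarity observation above settles it, since both matrices have nonnegative spectra.
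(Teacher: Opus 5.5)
Your proposal is correct and matches the paper's argument: the paper likewise specializes \eqref{eq:LC-conj2} to $p=1$, $q=s>2$, uses the fact that log-majorization implies the trace inequality $\Phi_{r,s}(A,B)\geq0$, and contradicts this with Theorem~\ref{thm:power-counter} for $r>s/(s-2)$. Your remark on reading the non-Hermitian products through their positive semidefinite similar representatives makes explicit a point the paper records in its introduction.
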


\section{The exact power-weight log-majorization range}\label{sec:tanahashi}

The counterexamples above give the necessity part of the sharp phase diagram.
We now prove the matching sufficiency. The range
$
1\leq s\leq2
$
in the normalized variables, equivalently
$
p\leq q\leq2p
$
in the original variables, was proved by Shi-Wei-Wang
\cite{ShiWeiWang}. It remains to prove that, for $s>2$, the only obstruction is
the hyperbola
$
r=\frac{s}{s-2}.
$
The main tool is the following notation-converted form of Tanahashi's
negative-power Furuta inequality \cite{Tanahashi}. We state it only in the form
needed below; the subsequent lemma uses its diagonal specialization \(p=q\).

\begin{theorem}[Tanahashi]\label{thm:Tanahashi}
	Let \(X,Y>0\) with \(Y\leq X\). Let \(0<p\leq1\), \(0<q\leq1\), and
	let \(\rho<0\) satisfy
	$
	-1\leq 2\rho<0.
	$
	Then
	\begin{equation}\label{eq:Tanahashi}
		(X^\rho Y^pX^\rho)^{1/q}
		\leq X^{(p+2\rho)/q}
	\end{equation}
	provided that
	\begin{equation}\label{eq:T1}
		-2\rho(1-q)\leq p\leq q-2\rho(1-q),
	\end{equation}
	and, in addition, either
	\[
	\frac12\leq q\leq1,
	\]
	or
	\[
	0<q<\frac12
	\]
	and
	\begin{equation}\label{eq:T2}
		\frac{-2\rho(1-q)-q}{1-2q}\leq p\leq
		\frac{-2\rho(1-q)}{1-2q}.
	\end{equation}
\end{theorem}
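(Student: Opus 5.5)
We do not intend to reprove Tanahashi's theorem from scratch. The plan is to derive Theorem~\ref{thm:Tanahashi} from the published negative-power Furuta inequality of \cite{Tanahashi} by a change of variables, and then check that the parameter region transforms into \eqref{eq:T1}--\eqref{eq:T2}. Tanahashi's original statement is phrased for $A\geq B>0$ with exponents $(p,t,r)$ and a left-hand side of the form $(A^{-t/2}B^{p}A^{-t/2})^{1/q}\leq A^{(p-t)/q}$ (possibly with $t$ negative or $q$ replaced by a reciprocal, depending on the normalization). We will set $A=X$, $B=Y$ and $\rho=-t/2$, so that $-1\leq2\rho<0$ corresponds to $0<t\leq1$. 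We then identify the outer exponent with $1/q$, and read off the admissible region in the new variables.

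The key steps are as follows. First, we write down Tanahashi's admissible set exactly, including the two regimes $\tfrac12\leq q\leq1$ and $0<q<\tfrac12$. Second, we substitute $t=-2\rho$. The linear constraints in $(p,t)$ then become $-2\rho(1-q)\leq p\leq q-2\rho(1-q)$, and in the small-$q$ regime they become the extra strip \eqref{eq:T2}. Third, as a consistency check, we verify the degenerate endpoint $q=1$ directly. There \eqref{eq:T1} reads $0\leq p\leq1$, and \eqref{eq:Tanahashi} becomes $X^\rho Y^pX^\rho\leq X^{p+2\rho}$. This follows from Löwner--Heinz, which gives $Y^p\leq X^p$, followed by congruence with $X^\rho$. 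This check confirms both the direction of the inequality and the sign convention for $\rho$.

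For the general case $q<1$, the underlying mechanism in \cite{Tanahashi} is the standard Furuta-type argument. One starts from $Y^p\leq X^p$. One then writes $(X^\rho Y^pX^\rho)^{1/q}$ as a congruence of a power of $Y^{p/2}X^{2\rho}Y^{p/2}$, using the identity $(Z^*Z)^{\lambda}=Z^*(ZZ^*)^{\lambda-1}Z$. One applies Löwner--Heinz to the inner power with exponent in $[0,1]$, and checks that every exponent appearing lies in the operator-monotone range. The constraints \eqref{eq:T1} and \eqref{eq:T2} are exactly the conditions that these intermediate exponents lie in $[0,1]$ or $[-1,0]$. If the citation route leaves any ambiguity, we would reconstruct the proof along these lines.

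The main obstacle is bookkeeping rather than analysis. Tanahashi's paper, and later restatements of it, use differing normalizations: the roles of $p$ versus $1/q$, and whether the negative power is written $A^{-t}$ or $A^{-t/2}$. The second regime \eqref{eq:T2} is particularly easy to mistranscribe. We would therefore verify the dictionary on several test points, such as $q=1$, $q=\tfrac12$, and the corner $p=-2\rho(1-q)$, before committing to it. Since the subsequent lemma uses only the diagonal specialization $p=q$, we would also record that specialization explicitly and confirm that it satisfies \eqref{eq:T1} whenever $-2\rho(1-q)\leq q$, which is the only case needed later.
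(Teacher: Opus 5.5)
Your plan matches the paper: the paper gives no independent proof of this statement and simply imports Tanahashi's negative-power Furuta inequality \cite{Tanahashi} in converted notation (your substitution $\rho=-t/2$), so your citation route, together with your correct $q=1$ sanity check via L\"owner--Heinz and congruence, is all that is required. One caution about your fallback sketch: after writing $(X^\rho Y^pX^\rho)^{1/q}=X^\rho Y^{p/2}\bigl(Y^{p/2}X^{2\rho}Y^{p/2}\bigr)^{(1-q)/q}Y^{p/2}X^{\rho}$, condition \eqref{eq:T1} is exactly $0\le (p+2\rho)/q-2\rho\le1$, but for $q<\tfrac12$ the inner exponent $(1-q)/q$ exceeds $1$, so L\"owner--Heinz alone cannot finish; you must apply the inequality itself to $X'=Y^{-1}\ge Y'=X^{-1}$ with $(\rho',p',q')=(-p/2,\,-2\rho,\,q/(1-q))$, whose condition \eqref{eq:T1} is precisely \eqref{eq:T2}, and recurse further when $q<\tfrac13$.
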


We shall use only the diagonal specialization $p=q$ in Tanahashi's notation.
The next lemma gives the form needed below; the endpoint $r=s/(s-2)$ in the main
log-majorization theorem will be obtained later by a limiting argument.

\begin{lemma}\label{lem:diag-negative-furuta}
	Let $0<Y\leq X$, let $0<\alpha\leq1$, and let $0\leq\beta<1$. Assume either
	$
	\alpha\geq\frac12,
	$
	or
	$
	0<\alpha<\frac12
	\quad\text{and}\quad
	\beta<\frac{\alpha}{1-\alpha}.
	$
	Then
	\begin{equation}\label{eq:diag-negative-furuta}
		\bigl(X^{-\alpha(1-\beta)/2}Y^\alpha
		X^{-\alpha(1-\beta)/2}\bigr)^{1/\alpha}
		\leq X^\beta.
	\end{equation}
\end{lemma}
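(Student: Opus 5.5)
The plan is to obtain \eqref{eq:diag-negative-furuta} as the direct specialization of Theorem~\ref{thm:Tanahashi} with $p=q=\alpha$ and
\[
\rho=-\frac{\alpha(1-\beta)}{2}.
\]
With these choices the left side of \eqref{eq:Tanahashi} is exactly the left side of \eqref{eq:diag-negative-furuta}. The exponent on the right becomes
\[
\frac{p+2\rho}{q}=\frac{\alpha-\alpha(1-\beta)}{\alpha}=\beta .
\]
So the proof reduces to checking that every hypothesis of Theorem~\ref{thm:Tanahashi} holds under the assumptions of the lemma. Each check is elementary; I will go through them in the order they appear in Theorem~\ref{thm:Tanahashi}.

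\emph{Range conditions.} We have $0<p=q=\alpha\leq1$. Since $0<\alpha\leq1$ and $0<1-\beta\leq1$, we get $0<\alpha(1-\beta)\leq1$. Hence $-1\leq2\rho<0$; strict negativity of $\rho$ uses $\beta<1$.

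\emph{Condition \eqref{eq:T1}.} Here $-2\rho(1-q)=\alpha(1-\beta)(1-\alpha)$. The left inequality reads $\alpha(1-\beta)(1-\alpha)\leq\alpha$, which holds because $(1-\beta)(1-\alpha)\leq1$. The right inequality reads $\alpha\leq\alpha+\alpha(1-\beta)(1-\alpha)$, which is trivial. If $\alpha\geq\frac12$, Theorem~\ref{thm:Tanahashi} now applies directly and the lemma follows.

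\emph{Case $0<\alpha<\frac12$.} Here we must also verify \eqref{eq:T2}, using that $1-2\alpha>0$ so we may multiply through.
\begin{itemize}
\item The lower bound $\frac{\alpha(1-\beta)(1-\alpha)-\alpha}{1-2\alpha}\leq\alpha$ is equivalent to $(1-\beta)(1-\alpha)\leq2(1-\alpha)$. This always holds, since $1-\beta\leq1<2$.
\item The upper bound $\alpha\leq\frac{\alpha(1-\beta)(1-\alpha)}{1-2\alpha}$ is equivalent to $1-2\alpha\leq(1-\beta)(1-\alpha)$, i.e.\ $\beta(1-\alpha)\leq\alpha$, i.e.\ $\beta\leq\alpha/(1-\alpha)$. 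This is precisely where the extra hypothesis is used, in its strict form.
\end{itemize}

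No step is a real obstacle. The only point needing care is this last equivalence in the small-$\alpha$ case, since it is the only place the hypothesis $\beta<\alpha/(1-\alpha)$ enters; the other checks are sign bookkeeping. With all hypotheses confirmed, \eqref{eq:Tanahashi} yields \eqref{eq:diag-negative-furuta}.
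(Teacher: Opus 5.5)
Your proposal is correct and follows the paper's proof essentially step for step: the same specialization $p=q=\alpha$, $\rho=-\alpha(1-\beta)/2$ of Theorem~\ref{thm:Tanahashi}, the same verification that \eqref{eq:T1} and the lower half of \eqref{eq:T2} are automatic, and the same reduction of the upper half of \eqref{eq:T2} to $\beta\leq\alpha/(1-\alpha)$. As the paper also notes, only this non-strict inequality is actually needed, so the strict hypothesis is more than enough rather than being used in its strict form.
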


\begin{proof}
	Apply Theorem \ref{thm:Tanahashi} with
	\[
	p=q=\alpha,
	\qquad
	\rho=-\frac{\alpha(1-\beta)}2.
	\]
	Then $-1\leq2\rho<0$. Condition \eqref{eq:T1} becomes
	\[
	\alpha(1-\beta)(1-\alpha)\leq\alpha
	\leq \alpha+\alpha(1-\beta)(1-\alpha),
	\]
	which is automatic. If $\alpha\geq1/2$, we are done. If $0<\alpha<1/2$, the
	left inequality in \eqref{eq:T2} is also automatic, while the right inequality in
	\eqref{eq:T2} is
	\[
	\alpha\leq
	\frac{\alpha(1-\beta)(1-\alpha)}{1-2\alpha},
	\]
	which is equivalent to
	$
	\beta\leq\frac{\alpha}{1-\alpha}.
	$
	In the lemma we impose the slightly stronger open condition
	$
	\beta<\frac{\alpha}{1-\alpha},
	$
	because this is the only range needed before the endpoint limiting argument.
	The endpoint case is not taken from Tanahashi at this point; it is obtained later
	by the continuity argument in Theorem~\ref{thm:sharp-power-log}. Thus Tanahashi's
	theorem yields \eqref{eq:diag-negative-furuta}.
\end{proof}

We now state the exact power-weight reverse log-majorization range in the
normalized variables $p=1$ and $q=s$.

\begin{theorem}\label{thm:sharp-power-log}
	Let $s\geq1$ and $r\geq0$. The log-majorization
	\begin{equation}\label{eq:sharp-power-log}
		A^{r+s}B^s\succ_{\log}
		A^r(A^{1/2}BA^{1/2})^s
	\end{equation}
	holds for all positive semidefinite matrices $A,B$ in all finite dimensions if
	and only if either
	\begin{equation}\label{eq:range-power-1}
		1\leq s\leq2,
	\end{equation}
	or
	\begin{equation}\label{eq:range-power-2}
		s>2
		\quad\text{and}\quad
		0\leq r\leq\frac{s}{s-2}.
	\end{equation}
	Consequently, the trace inequality
	\begin{equation}\label{eq:sharp-power-trace}
		\Tr A^{r+s}B^s\geq
		\Tr A^r(A^{1/2}BA^{1/2})^s
	\end{equation}
	holds universally in exactly the same parameter range.
\end{theorem}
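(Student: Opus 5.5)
Necessity follows from Theorem~\ref{thm:power-counter}, and sufficiency from Lemma~\ref{lem:diag-negative-furuta} combined with the antisymmetric tensor trick.

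\emph{Necessity.} Log-majorization implies the trace inequality \eqref{eq:sharp-power-trace}. Theorem~\ref{thm:power-counter} shows that \eqref{eq:sharp-power-trace} fails for $s>2$ and $r>s/(s-2)$. So both the log-majorization and the trace inequality can only hold in the stated range. It also suffices to prove \eqref{eq:sharp-power-log} in that range, since the trace statement then follows.

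\emph{Sufficiency.} The regime $1\leq s\leq2$ is the Shi--Wei--Wang theorem. The argument below also covers it, because then $\alpha=1/s\geq 1/2$ in Lemma~\ref{lem:diag-negative-furuta}. So fix $s>2$ and first take $0\leq r<s/(s-2)$ with $A,B>0$. Put $C=A^{1/2}BA^{1/2}$.

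\emph{Step 1: largest eigenvalue.} I first show
\[
\lambda_1\bigl(A^{r/2}C^sA^{r/2}\bigr)\leq\lambda_1\bigl(A^{(r+s)/2}B^sA^{(r+s)/2}\bigr).
\]
Both sides scale identically under $B\mapsto cB$, namely by $c^s$. So I normalize the right side to $1$. This is equivalent to $B^s\leq A^{-(r+s)}$.

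Now apply Lemma~\ref{lem:diag-negative-furuta} with
\[
X=A^{-(r+s)},\qquad Y=B^s,\qquad \alpha=1/s,\qquad \beta=\frac{r}{r+s}\in[0,1).
\]
Then $Y^\alpha=B$, and $X^{-\alpha(1-\beta)/2}=A^{(r+s)(1-\beta)/(2s)}=A^{1/2}$. The lemma therefore gives $C^s\leq X^\beta=A^{-r}$. This is exactly $A^{r/2}C^sA^{r/2}\leq I$, i.e.\ the left side is at most $1$.

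The lemma's hypothesis $\beta<\alpha/(1-\alpha)=1/(s-1)$ reads $r(s-1)<r+s$, that is $r(s-2)<s$. This is precisely the open hyperbolic condition, so the hyperbola enters here.

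\emph{Step 2: lift to log-majorization.} Apply Step 1 to $\Lambda^kA$ and $\Lambda^kB$. Antisymmetric powers are multiplicative and commute with real powers and with the congruence $A^{1/2}\,\cdot\,A^{1/2}$. Hence $\lambda_1$ of the $k$-th compound equals $\prod_{j\leq k}\lambda_j$, which gives the partial-product inequalities. For the determinant condition, both sides have determinant $\det A^{r+s}\det B^s$, so equality holds.

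\emph{Step 3: endpoint and singular cases.} For the endpoint $r=s/(s-2)$, fix $A,B$ and let $r\uparrow s/(s-2)$. The eigenvalues of $A^{(r+s)/2}B^sA^{(r+s)/2}$ and $A^{r/2}C^sA^{r/2}$ depend continuously on $r$, so the partial-product inequalities pass to the limit. For positive semidefinite $A,B$, replace them by $A+\delta I$ and $B+\delta I$ and let $\delta\downarrow0$. The products are continuous in $\delta$ when $r\geq0$ and $s\geq1$, with $0^0$ read through $A^0=I$. Spectra depend continuously on the matrix, so the nonstrict inequalities and the determinant equality survive.

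\emph{Main obstacle.} The delicate point is the exact parameter matching in Step 1: choosing $\beta$ so that the sandwich exponent becomes $1/2$ and the output exponent becomes $-r$, and then checking that Tanahashi's admissibility condition coincides with the hyperbola. The remaining steps are standard.
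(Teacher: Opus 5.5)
Your proposal is correct and follows essentially the same route as the paper. It uses the same homogeneity normalization to $B^s\le A^{-(r+s)}$ and the same substitution $X=A^{-(r+s)}$, $Y=B^s$, $\alpha=1/s$, $\beta=r/(r+s)$ into Lemma~\ref{lem:diag-negative-furuta}. It then applies the antisymmetric-tensor lift, takes the limits $r\uparrow s/(s-2)$ and $\delta\downarrow0$, and uses Theorem~\ref{thm:power-counter} for necessity. Your remark that $\alpha=1/s\ge 1/2$ lets the same lemma cover $1\le s\le 2$ without citing Shi--Wei--Wang is a valid small bonus given the lemma as stated.
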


The known and new parts of Theorem~\ref{thm:sharp-power-log} should be
distinguished. The range $1\leq s\leq2$ is due to Shi-Wei-Wang
\cite{ShiWeiWang}. For $s>2$, the subrange
$
0\leq r\leq1
$
is already contained in the known reverse GBLP range
$
0\leq r\leq p\leq q
$
after setting $p=1$ and $q=s$. Hence the new affirmative part for $s>2$ is
$
1<r\leq \frac{s}{s-2},
$
and the matching new obstruction is the failure for
$
r>\frac{s}{s-2}.
$

\begin{proof}
	The case $1\leq s\leq2$ is the all-$r$ range proved by
	Shi-Wei-Wang \cite[Theorem~2.1]{ShiWeiWang}, after setting $p=1$ and $q=s$.  The trace consequence follows from log-majorization. Therefore the
	new sufficiency to prove here is the range
	\[
	s>2,
	\qquad
	0\le r\le \frac{s}{s-2}.
	\]
	
	We first prove this positive part for \(A,B>0\). Put
	$
	C=A^{1/2}BA^{1/2},
	$
	and introduce the positive semidefinite Hermitian representatives
	\[
	X_{A,B}:=A^{(r+s)/2}B^sA^{(r+s)/2},
	\qquad
	Y_{A,B}:=A^{r/2}C^sA^{r/2}.
	\]
	These representatives have the eigenvalues of the products appearing in
	\eqref{eq:sharp-power-log}. Indeed,
	\[
	A^{(r+s)/2}B^sA^{(r+s)/2}
	\sim B^sA^{r+s}\sim A^{r+s}B^s,
	\]
	and similarly
	\[
	A^{r/2}C^sA^{r/2}
	\sim C^sA^r\sim A^rC^s.
	\]
	Therefore it is enough to prove
	$
	Y_{A,B}\prec_{\log} X_{A,B}.
	$
	We first prove the largest-eigenvalue comparison
	\begin{equation}\label{eq:lmax-comparison}
		\lambda_{\max}(Y_{A,B})\leq \lambda_{\max}(X_{A,B}).
	\end{equation}
	By homogeneity in $B$, it suffices to prove that
	\[
	X_{A,B}\leq I
	\quad\Longrightarrow\quad
	Y_{A,B}\leq I.
	\]
	Indeed, if $\lambda_{\max}(X_{A,B})=L$, replace $B$ by $L^{-1/s}B$.
	
	Assume therefore that
	$
	A^{(r+s)/2}B^sA^{(r+s)/2}\leq I.
	$
	Equivalently,
	\begin{equation}\label{eq:B-upper}
		B^s\leq A^{-(r+s)}.
	\end{equation}
	Set
	\[
	X=A^{-(r+s)},
	\qquad
	Y=B^s,
	\qquad
	\alpha=\frac1s,
	\qquad
	\beta=\frac{r}{r+s}.
	\]
	Then $0<Y\leq X$ by \eqref{eq:B-upper}. Moreover,
	\[
	X^{-\alpha(1-\beta)/2}=A^{1/2},
	\qquad
	Y^\alpha=B,
	\qquad
	X^\beta=A^{-r}.
	\]
	
	Assume first that
	\[
	s>2,
	\qquad
	0\le r<\frac{s}{s-2}.
	\]
	Then $\alpha<1/2$ and
	\[
	\beta<\frac{\alpha}{1-\alpha}
	\quad\Longleftrightarrow\quad
	r<\frac{s}{s-2}.
	\]
	By Lemma~\ref{lem:diag-negative-furuta},
	\[
	C^s=(A^{1/2}BA^{1/2})^s\leq A^{-r}.
	\]
	Therefore
	\[
	Y_{A,B}=A^{r/2}C^sA^{r/2}\leq I,
	\]
	which proves \eqref{eq:lmax-comparison} in the open range.
	
	It remains to treat the endpoint $s>2$ and $r=s/(s-2)$. Choose
	$r_j\uparrow r$ with $r_j<s/(s-2)$. The preceding argument gives the
	largest-eigenvalue comparison for each $r_j$. Since
	\[
	A^{(r_j+s)/2}B^sA^{(r_j+s)/2}
	\to
	A^{(r+s)/2}B^sA^{(r+s)/2},
	\qquad
	A^{r_j/2}C^sA^{r_j/2}\to A^{r/2}C^sA^{r/2}
	\]
	in norm, the comparison passes to the limit. Hence
	\eqref{eq:lmax-comparison} also holds at the endpoint.
	
	To pass from the largest eigenvalue to log-majorization, we use the standard
	antisymmetric tensor power argument; see, for example, \cite{AndoHiai}. For
	\(1\le k\le n\), apply the already proved largest-eigenvalue comparison to the
	positive matrices \(\wedge^k A\) and \(\wedge^k B\). Since
	\[
	\wedge^k(A^{1/2}BA^{1/2})
	=
	(\wedge^k A)^{1/2}(\wedge^k B)(\wedge^k A)^{1/2},
	\]
	the corresponding Hermitian representatives are precisely
	\[
	\wedge^k Y_{A,B}
	\quad\text{and}\quad
	\wedge^k X_{A,B}.
	\]
	Therefore
	\[
	\lambda_{\max}(\wedge^k Y_{A,B})
	\le
	\lambda_{\max}(\wedge^k X_{A,B}).
	\]
	Using
	\[
	\lambda_{\max}(\wedge^k Y_{A,B})
	=
	\prod_{j=1}^k\lambda_j(Y_{A,B}),
	\qquad
	\lambda_{\max}(\wedge^k X_{A,B})
	=
	\prod_{j=1}^k\lambda_j(X_{A,B}),
	\]
	we obtain, for every \(1\le k\le n\),
	\[
	\prod_{j=1}^k\lambda_j(Y_{A,B})
	\leq
	\prod_{j=1}^k\lambda_j(X_{A,B}).
	\]
	For $k=n$, the two products are equal because
	\[
	\det Y_{A,B}
	=\det(A)^r\det(C)^s
	=\det(A)^{r+s}\det(B)^s
	=\det X_{A,B}.
	\]
	Hence $Y_{A,B}\prec_{\log}X_{A,B}$.
	
	The semidefinite case follows by continuity. For $\delta>0$, apply the positive
	definite case to
	\[
	A_\delta=A+\delta I,
	\qquad
	B_\delta=B+\delta I.
	\]
	The Hermitian representatives
	\[
	A_\delta^{(r+s)/2}B_\delta^sA_\delta^{(r+s)/2},
	\qquad
	A_\delta^{r/2}
	(A_\delta^{1/2}B_\delta A_\delta^{1/2})^s
	A_\delta^{r/2}
	\]
	converge in norm to the corresponding representatives for $A,B$. Eigenvalues
	and exterior-power eigenvalue products are continuous under norm convergence,
	so the log-majorization inequalities pass to the limit as $\delta\downarrow0$.
	
	For necessity, suppose $s>2$ and $r>s/(s-2)$. Theorem~\ref{thm:power-counter}
	gives positive definite $A,B$ for which the trace inequality
	\eqref{eq:sharp-power-trace} fails. Therefore the log-majorization
	\eqref{eq:sharp-power-log} cannot hold universally. This proves both the
	necessity for log-majorization and the necessity for the trace inequality.
\end{proof}

The endpoint estimate also gives a stability bound outside the sharp
power-weight range, with a constant depending only on the spectral ratio of
$A$.

\begin{corollary}\label{cor:power-kantorovich}
	Fix $s>2$, set $R_s=s/(s-2)$, and take $A\in\Pn$, $B\in\Psn$. Put
	$
	\kappa(A)=\frac{\lambda_{\min}(A)}{\lambda_{\max}(A)}.
	$
	Then, for every $r\geq0$,
	\begin{equation}\label{eq:power-kantorovich}
		\Tr A^r(A^{1/2}BA^{1/2})^s
		\leq
		\kappa(A)^{-(r-R_s)_+}
		\Tr A^{r+s}B^s .
	\end{equation}
	In particular, the constant is $1$ throughout the sharp positive range
	$0\leq r\leq R_s$.
\end{corollary}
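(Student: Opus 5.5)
The case $r\le R_s$ is the trace consequence of Theorem~\ref{thm:sharp-power-log}, since then $(r-R_s)_+=0$. So assume $r>R_s$. The plan is to split $A^r=A^{r-R_s}A^{R_s}$ and move the extra power $A^{r-R_s}$ from one side to the other, paying for the move with spectral bounds on $A$. By homogeneity in $A$ we may normalize $\lambda_{\max}(A)=1$, so that $\lambda_{\min}(A)=\kappa:=\kappa(A)$ and
\[
\kappa^{r-R_s}I\leq A^{r-R_s}\leq I .
\]
Both sides of \eqref{eq:power-kantorovich} scale by the same power of the scaling factor, since the total degree in $A$ is $r+s$ on each side, so this normalization is harmless.

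\textbf{Step 1.} Bound the left side by the endpoint case. Put $C=A^{1/2}BA^{1/2}$ and write
\[
\Tr A^rC^s=\Tr A^{r-R_s}\,\bigl(A^{R_s/2}C^sA^{R_s/2}\bigr).
\]
The matrix in parentheses is positive semidefinite. Hence the elementary fact $\Tr SE\leq\Tr TE$ from the introduction, applied with $A^{r-R_s}\leq I$, gives
\[
\Tr A^rC^s\leq \Tr A^{R_s}C^s .
\]
The endpoint case $r=R_s$ of Theorem~\ref{thm:sharp-power-log} then gives
\[
\Tr A^{R_s}C^s\leq\Tr A^{R_s+s}B^s .
\]

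\textbf{Step 2.} Compare the endpoint right side with the $r$ right side. Write
\[
\Tr A^{R_s+s}B^s=\Tr A^{-(r-R_s)}\bigl(A^{(r+s)/2}B^sA^{(r+s)/2}\bigr).
\]
Here we use that $A^{-(r-R_s)}$ commutes with the powers of $A$, so cyclicity of the trace lets us regroup the factors. Since $A^{-(r-R_s)}\leq\kappa^{-(r-R_s)}I$ and the bracketed matrix is positive semidefinite, the same elementary fact gives
\[
\Tr A^{R_s+s}B^s\leq\kappa^{-(r-R_s)}\Tr A^{r+s}B^s .
\]
Chaining Steps~1 and~2 yields \eqref{eq:power-kantorovich}.

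\textbf{Main obstacle.} The only delicate point is that the step $A^{r-R_s}\leq I$ must be applied against a positive semidefinite matrix. This is handled by sandwiching symmetrically, using the Hermitian representatives as in the proof of Theorem~\ref{thm:sharp-power-log}, rather than working with the non-Hermitian products $A^rC^s$ and $A^{r+s}B^s$ directly. With that in place, the rest is bookkeeping of exponents.
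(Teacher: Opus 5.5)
Your proof is correct and follows the paper's argument: normalize $\lambda_{\max}(A)=1$, use $A\le I$ to lower the weight from $A^r$ to the endpoint $A^{R_s}$, apply Theorem~\ref{thm:sharp-power-log} at $r=R_s$, and use $A\ge\kappa(A)I$ to raise $A^{R_s+s}$ back to $A^{r+s}$. The paper phrases the two comparisons as the operator inequalities $A^r\le A^{R_s}$ and $A^{R_s+s}\le\kappa(A)^{-(r-R_s)}A^{r+s}$, traced against $C^s$ and $B^s$. This is equivalent to your sandwiched formulation.
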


\begin{proof}
	By homogeneity, normalize $\lambda_{\max}(A)=1$. Then
	\[
	\kappa(A)I\leq A\leq I.
	\]
	If $0\leq r\leq R_s$, the assertion is precisely the trace consequence of
	Theorem~\ref{thm:sharp-power-log}.
	
	Assume now that $r>R_s$. Since $A\leq I$, we have
	$
	A^r\leq A^{R_s}.
	$
	Therefore
	\[
	\Tr A^r(A^{1/2}BA^{1/2})^s
	\leq
	\Tr A^{R_s}(A^{1/2}BA^{1/2})^s.
	\]
	By Theorem~\ref{thm:sharp-power-log} at the endpoint $R_s$,
	\[
	\Tr A^{R_s}(A^{1/2}BA^{1/2})^s
	\leq
	\Tr A^{R_s+s}B^s.
	\]
	Finally, since $A\geq\kappa(A)I$ and $r>R_s$,
	\[
	A^{R_s+s}
	=
	A^{r+s}A^{R_s-r}
	\leq
	\kappa(A)^{-(r-R_s)}A^{r+s}.
	\]
	Taking the trace against $B^s\geq0$ gives
	\[
	\Tr A^{R_s+s}B^s
	\leq
	\kappa(A)^{-(r-R_s)}
	\Tr A^{r+s}B^s.
	\]
	Combining the preceding inequalities proves \eqref{eq:power-kantorovich}.
\end{proof}

The same endpoint estimate gives a weighted stability bound. The constant below
measures how far the weight is from the endpoint power $x^{R_s}$ on the
spectrum of $A$.

\begin{proposition}\label{prop:weighted-stability}
	Let $s>2$ and put
	$
	R_s=\frac{s}{s-2}.
	$
	Let $A\in\Pn$, $B\in\Psn$, and let $f$ be strictly positive on $\spec(A)$. Define
	\[
	\Omega_{s,A}(f)
	=
	\frac{
		\max_{\lambda\in\spec(A)} f(\lambda)\lambda^{-R_s}
	}{
		\min_{\lambda\in\spec(A)} f(\lambda)\lambda^{-R_s}
	}.
	\]
	Then
	\begin{equation}\label{eq:weighted-stability}
		\Tr f(A)(A^{1/2}BA^{1/2})^s
		\leq
		\Omega_{s,A}(f)
		\Tr f(A)A^sB^s.
	\end{equation}
\end{proposition}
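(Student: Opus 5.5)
Write $g(\lambda)=f(\lambda)\lambda^{-R_s}$ on $\spec(A)$, and put $m=\min_{\spec(A)}g$ and $M=\max_{\spec(A)}g$, so that $\Omega_{s,A}(f)=M/m$. The plan is to sandwich $f(A)$ between two multiples of the endpoint power $A^{R_s}$ and then apply the endpoint trace inequality of Theorem~\ref{thm:sharp-power-log}. This is the same mechanism as in Corollary~\ref{cor:power-kantorovich}, with the scalar $\kappa(A)$ replaced by the ratio of the weight to $x^{R_s}$.

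\textbf{Step 1: operator sandwich.} Since $f(A)$ and $A^{R_s}$ are both functions of $A$, they commute and are simultaneously diagonal in an eigenbasis of $A$. Comparing eigenvalues gives
\[
mA^{R_s}\leq f(A)\leq MA^{R_s}.
\]

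\textbf{Step 2: bound the left side.} We have $(A^{1/2}BA^{1/2})^s\geq0$ and $0\leq f(A)\leq MA^{R_s}$. The elementary trace fact from the introduction ($0\leq S\leq T$, $E\geq0$ imply $\Tr SE\leq\Tr TE$) therefore gives
\[
\Tr f(A)(A^{1/2}BA^{1/2})^s\leq M\,\Tr A^{R_s}(A^{1/2}BA^{1/2})^s.
\]

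\textbf{Step 3: endpoint inequality.} By Theorem~\ref{thm:sharp-power-log} at $r=R_s$ (the endpoint is included there), the right side of Step~2 is at most $M\,\Tr A^{R_s+s}B^s$.

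\textbf{Step 4: return to $f(A)$.} Since $A^s$ commutes with everything in the sandwich, we can write $\Tr A^{R_s+s}B^s=\Tr A^{R_s}\,(A^{s/2}B^sA^{s/2})$, where $A^{s/2}B^sA^{s/2}\geq0$. Using $A^{R_s}\leq m^{-1}f(A)$ from Step~1 and the same trace fact,
\[
\Tr A^{R_s+s}B^s\leq m^{-1}\Tr f(A)A^{s/2}B^sA^{s/2}=m^{-1}\Tr f(A)A^sB^s.
\]
The last equality holds by cyclicity of the trace and because $f(A)$ commutes with $A^{s/2}$. Chaining Steps~2--4 yields \eqref{eq:weighted-stability} with constant $M/m=\Omega_{s,A}(f)$.

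The only point needing care is Step~4: $A^{R_s+s}B^s$ is not Hermitian, so the comparison has to be applied to the positive semidefinite representative $A^{s/2}B^sA^{s/2}$ rather than to $B^s$ directly. The hypothesis $A>0$ guarantees that $\lambda^{-R_s}$ is finite, and strict positivity of $f$ guarantees $m>0$, so $\Omega_{s,A}(f)$ is well defined. No further obstacle is expected.
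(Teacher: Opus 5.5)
Your proof is correct and follows essentially the same route as the paper: sandwich the weight between multiples of the endpoint power $A^{R_s}$, bound $\Tr f(A)(A^{1/2}BA^{1/2})^s$ by $\max g$ times the endpoint trace from Theorem~\ref{thm:sharp-power-log}, and bound that trace by $(\min g)^{-1}\Tr f(A)A^sB^s$. The only cosmetic difference is that the paper factors $f(A)=A^{R_s/2}g(A)A^{R_s/2}$ and compares $g(A)$ with scalar multiples of $I$ (against $C^s$ and $B^s$). You instead compare $f(A)$ with $mA^{R_s}$ and $MA^{R_s}$ directly, testing against $C^s$ and $A^{s/2}B^sA^{s/2}$; the two are equivalent.
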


\begin{proof}
	Set
	\[
	C=A^{1/2}BA^{1/2},
	\qquad
	g(A)=f(A)A^{-R_s}.
	\]
	Then
	\[
	g_{\min}I\leq g(A)\leq g_{\max}I,
	\]
	where
	\[
	g_{\min}
	=
	\min_{\lambda\in\spec(A)} f(\lambda)\lambda^{-R_s},
	\qquad
	g_{\max}
	=
	\max_{\lambda\in\spec(A)} f(\lambda)\lambda^{-R_s}.
	\]
	Since $g(A)$ commutes with powers of $A$, we have
	\[
	f(A)=A^{R_s/2}g(A)A^{R_s/2}.
	\]
	Hence, using $C^s\ge0$ and trace monotonicity,
	\[
	\begin{aligned}
		\Tr f(A)C^s
		&=
		\Tr A^{R_s/2}g(A)A^{R_s/2}C^s\\
		&\leq
		g_{\max}\Tr A^{R_s}C^s.
	\end{aligned}
	\]
	By Theorem~\ref{thm:sharp-power-log} at the endpoint $R_s$,
	\[
	\Tr A^{R_s}C^s
	\leq
	\Tr A^{R_s+s}B^s.
	\]
	On the other hand,
	\[
	f(A)A^s
	=
	A^{(R_s+s)/2}g(A)A^{(R_s+s)/2},
	\]
	and therefore, using $B^s\ge0$,
	\[
	\begin{aligned}
		\Tr f(A)A^sB^s
		&=
		\Tr A^{(R_s+s)/2}g(A)A^{(R_s+s)/2}B^s\\
		&\geq
		g_{\min}\Tr A^{R_s+s}B^s.
	\end{aligned}
	\]
	Combining the preceding estimates gives
	\[
	\Tr f(A)(A^{1/2}BA^{1/2})^s
	\leq
	\frac{g_{\max}}{g_{\min}}
	\Tr f(A)A^sB^s,
	\]
	which is \eqref{eq:weighted-stability}.
\end{proof}

\begin{corollary}\label{cor:phase}
	For $s\geq1$ define
	\[
	\cR_s:=\left\{r\geq0:
	\Tr A^{r+s}B^s\geq
	\Tr A^r(A^{1/2}BA^{1/2})^s
	\text{ for all } A,B\geq0\right\}.
	\]
	Then
	\begin{equation}\label{eq:phase-exact}
		\cR_s=[0,\infty),\qquad 1\leq s\leq2,
	\end{equation}
	and
	\begin{equation}\label{eq:phase-exact-2}
		\cR_s=\left[0,\frac{s}{s-2}\right],
		\qquad s>2.
	\end{equation}
	For $0<s<1$, the inequality has the opposite sign for every $r\geq0$, by
	\eqref{eq:LC-sublinear}. At $s=1$ the two sides are equal by cyclicity of the
	trace.
\end{corollary}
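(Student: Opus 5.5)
This corollary is essentially a repackaging of Theorem~\ref{thm:sharp-power-log}, so I would argue it by two inclusions for each range of $s$, and then treat the two side remarks about $0<s<1$ and $s=1$ separately.

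\emph{Range $1\leq s\leq2$.} The inclusion $\cR_s\subseteq[0,\infty)$ holds by definition. Conversely, for any $r\geq0$, Theorem~\ref{thm:sharp-power-log} gives the log-majorization \eqref{eq:sharp-power-log} for all $A,B\geq0$. Log-majorization implies weak majorization of the eigenvalue vectors, and hence the trace inequality \eqref{eq:sharp-power-trace}. So every $r\geq0$ lies in $\cR_s$, which proves \eqref{eq:phase-exact}.

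\emph{Range $s>2$.} For $0\leq r\leq s/(s-2)$, the same theorem, including the endpoint obtained there by the limiting argument, yields \eqref{eq:sharp-power-trace} for all positive semidefinite $A,B$. Thus $[0,s/(s-2)]\subseteq\cR_s$. For $r>s/(s-2)$, Theorem~\ref{thm:power-counter} produces $A_\eps,B_\eps\in\mathbb P_3$ with $\Phi_{r,s}(A_\eps,B_\eps)<0$, so $r\notin\cR_s$. Together these give \eqref{eq:phase-exact-2}.

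\emph{Remark for $0<s<1$.} Apply \eqref{eq:LC-sublinear} with the nondecreasing nonnegative weight $f(t)=t^r$ on $[0,\infty)$. This gives $\Tr A^{r}A^sB^s\leq\Tr A^r(A^{1/2}BA^{1/2})^s$, which is the opposite sign.

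\emph{Remark for $s=1$.} Here both sides equal $\Tr A^{r+1}B$: we have $\Tr A^r A^{1/2}BA^{1/2}=\Tr A^{r+1/2}BA^{1/2}=\Tr A^{r+1}B$ by cyclicity, since all powers of $A$ commute.

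There is no real obstacle, since all the work is done by the earlier theorems. The only point needing care is the endpoint $r=s/(s-2)$ for $s>2$, and that endpoint is already covered by Theorem~\ref{thm:sharp-power-log}.
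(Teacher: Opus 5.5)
Your proposal is correct and follows the paper's route. The corollary is just the trace consequence of Theorem~\ref{thm:sharp-power-log}: Shi--Wei--Wang for $1\le s\le2$, and the Tanahashi argument with the limiting endpoint for $s>2$, $r\le s/(s-2)$. Theorem~\ref{thm:power-counter} excludes $r>s/(s-2)$, and the two side remarks follow from \eqref{eq:LC-sublinear} with $f(t)=t^r$ and from cyclicity, exactly as you argue.
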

\section{The monotone-weight phase transition and minimal dimension}
\label{sec:monotone}

We shall use the following theorem of Shi-Wei-Wang \cite[Theorem~2.4]{ShiWeiWang}.

\begin{theorem}\label{thm:SWW-monotone-positive}
	Let $A,B\in\Psn$, and let $f$ be nonnegative and nondecreasing on an interval
	containing $\spec(A)$. Then
	\[
	\Tr f(A)A^sB^s
	\geq
	\Tr f(A)(A^{1/2}BA^{1/2})^s,
	\qquad 1\leq s\leq2.
	\]
\end{theorem}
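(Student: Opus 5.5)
The statement is Theorem~\ref{thm:SWW-monotone-positive}, which the paper quotes from Shi--Wei--Wang; we only need to indicate how such a proof would go. The plan is to reduce an arbitrary nonnegative nondecreasing weight to a nonnegative combination of spectral projections. Since $A\in\Psn$ has finitely many eigenvalues $\lambda_1>\cdots>\lambda_m\ge0$, write $f(A)=\sum_k c_k P_k$, where $P_k$ is the spectral projection of $A$ onto eigenvalues $\geq\lambda_k$. The coefficients are $c_k=f(\lambda_k)-f(\lambda_{k-1})\geq0$ for $k\ge2$ and $c_1=f(\lambda_1)$. Because this is a layer-cake decomposition with nonnegative coefficients, both sides are linear in $f$. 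It therefore suffices to prove
\[
\Tr P A^sB^s\geq \Tr P(A^{1/2}BA^{1/2})^s
\]
for every spectral projection $P=\mathbf 1_{[\lambda,\infty)}(A)$, and for $1\le s\le2$.

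For this reduced inequality we use the power-weight result already available, namely the $1\leq s\leq 2$ all-$r$ trace inequality of Theorem~\ref{thm:sharp-power-log} (Shi--Wei--Wang), and take a limit $r\to\infty$ after normalization. Normalize so that $\lambda=1$ is the threshold eigenvalue; that is, replace $A$ by $A/\lambda$, which multiplies both sides by the same factor $\lambda^{s}$. Then $A^r\to P_{\{1\}}$, the projection onto the eigenvalue $1$, but $A^r$ also blows up on eigenvalues $>1$. So a direct limit only yields the weight of the top eigenspace. To handle an intermediate threshold, we would use a more flexible family, for example $f_r(A)=A^r/(I+A^r)$ or a similar function of $A$ that tends to $P$. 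These are not pure powers, however, so this route stalls unless a power-type inequality holds for such weights.

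The alternative I would actually pursue is a direct argument in the style of the Liu--Cheng/Shi--Wei--Wang proofs. Write $A=\lambda P+ \text{(rest)}$ and compare the compressions $PA^{s}B^sP$-type traces with $P C^s P$, where $C=A^{1/2}BA^{1/2}$. Operator convexity of $t\mapsto t^s$ for $1\le s\le2$ is the key input; this is exactly where $s\le2$ enters, consistent with the counterexamples of Theorem~\ref{thm:monotone-counter}. Concretely, set $A_1=PA$ and $A_2=(I-P)A$, so that $A_1\ge\lambda P$ and $A_2\le\lambda(I-P)$. We would then apply the Hansen--Pedersen/Jensen operator inequality to $C=A^{1/2}BA^{1/2}$, viewed as a compression of $B$ by the contraction $A^{1/2}/\|A\|^{1/2}$, restricted via $P$.

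The main obstacle is precisely this projection-weighted step: bounding $\Tr PC^s$ by $\Tr PA^sB^s$ using only the monotone structure across the spectral gap of $A$ at $\lambda$. I would try first to prove it via the integral representation
\[
t^s=c\int_0^\infty\Bigl(\frac{t}{\mu}-\frac{t}{t+\mu}\cdot\text{const}\Bigr)\mu^{s-1}\,d\mu
\]
for $1<s<2$. This reduces the problem to resolvent inequalities $\Tr P\,C(C+\mu)^{-1}$ versus the corresponding expression in $A$ and $B$, which are of rank-one/Schur-complement type and can be checked using $A_2\le\lambda\le A_1$. The cases $s=1$ (equality by cyclicity) and $s=2$ (a direct expansion) serve as consistency checks.
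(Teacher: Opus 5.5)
\textbf{Verdict: genuine gap.} The projection-weighted inequality, which is the whole content of the theorem, is never proved. Note that the paper gives no proof of this statement; it imports it from Shi--Wei--Wang (their Theorem~2.4). Your proposal therefore has to stand on its own, and it does not.

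Your layer-cake reduction is sound, though the indices are reversed. With $\lambda_1>\cdots>\lambda_m$ and $P_k=\mathbf 1_{[\lambda_k,\infty)}(A)$ one needs $f(A)=f(\lambda_m)I+\sum_{k<m}\bigl(f(\lambda_k)-f(\lambda_{k+1})\bigr)P_k$; your coefficients $c_k=f(\lambda_k)-f(\lambda_{k-1})$ are $\le0$. You are also right that the $r\to\infty$ limit of the power-weight inequality reaches only the top eigenspace.

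The proposal then stops exactly where the proof has to happen. The inequality $\Tr PA^sB^s\ge\Tr PC^s$, with $C=A^{1/2}BA^{1/2}$ and $P$ an intermediate spectral projection, is never established, and neither suggested route delivers it. Jensen's operator inequality with the contraction $A^{1/2}/\|A\|^{1/2}$ gives only $\Tr PC^s\le\|A\|^{s-1}\Tr PAB^s$. That is weaker than the target, because $P\|A\|^{s-1}A\ge PA^s$. The resolvent plan never says which inequality would be proved for each $\mu$, and no termwise comparison is apparent. The left side keeps the factor $A^s$ outside the integral representation of $B^s$, while the right side is a function of the single operator $C$.

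\textbf{The missing idea} is to split $A^{1/2}$ at the threshold rather than at $\|A\|$. Normalize the threshold to $1$ and put $E=A^{1/2}P+(I-P)\ge I$ and $K=P+A^{1/2}(I-P)$. Then $0\le K\le I$, $KE=EK=A^{1/2}$, $KP=P$, and $E^{-1}(I-P)=I-P$. For $1\le s\le2$ the function $t^s$ is operator convex on $[0,\infty)$ with $0^s=0$. Hence the Hansen--Pedersen inequality $(Z^*XZ)^s\le Z^*X^sZ$ holds for every contraction $Z$ and every $X\ge0$. This is the only place where $s\le2$ enters.
\begin{enumerate}[label=(\roman*)]
\item Applied to $C=K(EBE)K$, it gives $\Tr PC^s\le\Tr P(EBE)^s$.
\item Applied to $B=E^{-1}(EBE)E^{-1}$, it gives $\Tr(I-P)B^s\le\Tr(I-P)(EBE)^s$.
\item Araki--Lieb--Thirring gives $\Tr(EBE)^s\le\Tr E^{2s}B^s=\Tr PA^sB^s+\Tr(I-P)B^s$.
\end{enumerate}
Subtracting (ii) from (iii) yields $\Tr P(EBE)^s\le\Tr PA^sB^s$, and (i) then finishes the projection case.

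Operator convexity is used twice, once on each side of the spectral gap. This is what your splitting $A_1\ge\lambda P$, $A_2\le\lambda(I-P)$ was reaching for but did not carry out.
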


Combining the positive range of Shi-Wei-Wang with
Theorem~\ref{thm:monotone-counter} gives the sharp correction of the
monotone-weight conjecture: the range $1\leq s\leq2$ holds universally, while
every $s>2$ fails already in dimension $3$. Together with Liu-Cheng's
sublinear theorem, this yields the following complete monotone-weight picture.

\begin{corollary}\label{cor:corrected-conj1}
	For fixed $s\geq1$, the inequality
	\[
	\Tr f(A)A^sB^s
	\geq
	\Tr f(A)(A^{1/2}BA^{1/2})^s
	\]
	holds for all $A,B\in\Psn$ in all finite dimensions and all nonnegative
	nondecreasing weights $f$ on an interval containing $\spec(A)$ if and only if
	\[
	1\leq s\leq2.
	\]
	Moreover, the full monotone-weight picture is
	\[
	\begin{array}{ll}
		0\leq s\leq1:&
		\Tr f(A)A^sB^s\leq \Tr f(A)(A^{1/2}BA^{1/2})^s,\\[1mm]
		1\leq s\leq2:&
		\Tr f(A)A^sB^s\geq \Tr f(A)(A^{1/2}BA^{1/2})^s,\\[1mm]
		s>2:&
		\text{the reverse inequality fails in general already in dimension }3.
	\end{array}
	\]
\end{corollary}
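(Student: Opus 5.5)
The plan is to assemble Corollary~\ref{cor:corrected-conj1} from three results already available; no new estimate is needed. The corollary has two parts: the ``if and only if'' for $s\geq1$, and the three-line table. I would handle them in that order.

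\emph{Sufficiency.} If $1\leq s\leq2$, the inequality holds for all $A,B\in\Psn$ and all nonnegative nondecreasing $f$. This is exactly Theorem~\ref{thm:SWW-monotone-positive}, and the dimension plays no role.

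\emph{Necessity.} Suppose $s>2$. Theorem~\ref{thm:monotone-counter} gives $A,B\in\mathbb P_3\subset\overline{\mathbb P}_3$ and a nonnegative nondecreasing continuous $f$ with a strict violation. Hence the inequality cannot hold for all matrices in all finite dimensions. The same example also shows that the failure already occurs in dimension $3$, which is the third row of the table. I would note in passing that the counterexample embeds in any dimension $n\geq3$ by taking a direct sum with a zero block for $B$ and an identity block for $A$. This is not needed for the statement as written, since failure in one dimension already breaks the ``all dimensions'' clause.

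\emph{Remaining rows of the table.} The first row, $0\leq s\leq1$, is the Liu--Cheng inequality \eqref{eq:LC-sublinear}, which is quoted as a known result. The second row is Theorem~\ref{thm:SWW-monotone-positive} again. At $s=1$ the first two rows are consistent, because both sides equal $\Tr f(A)AB$. This follows from $(A^{1/2}BA^{1/2})^1=A^{1/2}BA^{1/2}$ together with cyclicity and the fact that $f(A)$ commutes with $A^{1/2}$.

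The only step that needs care is checking that the hypotheses match. The counterexample's $f$ is defined on $[b,1]$, an interval containing $\spec(A)$. It is nonnegative, nondecreasing and continuous, so it is admissible in the corollary's class. Nothing beyond these observations is needed, so I expect no real obstacle.
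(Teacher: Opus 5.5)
Your proposal is correct and follows the same route as the paper: the sufficiency for $1\leq s\leq2$ comes from Theorem~\ref{thm:SWW-monotone-positive}, the failure for $s>2$ (already in dimension $3$) from Theorem~\ref{thm:monotone-counter}, and the first row of the table from the Liu--Cheng inequality \eqref{eq:LC-sublinear}. Your side remarks, the equality at $s=1$ by cyclicity and the embedding of the counterexample into higher dimensions, are also correct, though not needed.
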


\subsection{Minimal dimension of failure}

The counterexamples constructed above are three-dimensional. We now show that
this is optimal: in dimension $2$ the monotone-weight reverse trace inequality
holds for every $s\geq1$.
The key point is the following two-dimensional endpoint comparison.

\begin{lemma}\label{lem:two-by-two-endpoint}
	Let $0\leq a\leq1$, set
	\[
	D_a=\begin{pmatrix}1&0\\0&\sqrt a\end{pmatrix},
	\]
	and let $B\in\overline{\mathbb P}_2$. Then, for every $s\geq1$,
	\begin{equation}\label{eq:two-by-two-endpoint}
		\langle e_1,B^s e_1\rangle
		\geq
		\langle e_1,(D_aBD_a)^s e_1\rangle .
	\end{equation}
\end{lemma}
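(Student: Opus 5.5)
The plan is to reduce \eqref{eq:two-by-two-endpoint} to a monotonicity statement in the parameter $a$, and then check it infinitesimally using the Daleckii--Krein formula. Set
\[
\varphi_B(a)=\langle e_1,(D_aBD_a)^s e_1\rangle,\qquad 0\le a\le 1 .
\]
Since $D_1=I$, the claim is $\varphi_B(a)\le\varphi_B(1)$. The family has a semigroup structure, $D_{aa'}=D_aD_{a'}$, so
\[
D_{aa'}BD_{aa'}=D_a(D_{a'}BD_{a'})D_a .
\]
Hence it suffices to show $\varphi_B$ is nondecreasing on $(0,1]$. Differentiating at the point $a'$ amounts to differentiating $a\mapsto\varphi_{B'}(a)$ at $a=1$, with $B'=D_{a'}BD_{a'}\in\overline{\mathbb P}_2$. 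So the whole lemma reduces to one claim: $\frac{d}{da}\varphi_B(a)\big|_{a=1}\ge0$ for every $B\in\overline{\mathbb P}_2$. The endpoint $a=0$, and the case where $B$ has a repeated eigenvalue (then $B$ is scalar and both sides agree trivially), are handled by continuity. Singular $B$ is handled by replacing it with $B+\delta I$ and letting $\delta\downarrow0$.

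To compute the derivative, note that $\frac{d}{da}D_a\big|_{a=1}=\tfrac12E$ with $E=e_2e_2^*$, so $\frac{d}{da}(D_aBD_a)\big|_{a=1}=\tfrac12(EB+BE)$. Write $B=\lambda_1q_1q_1^*+\lambda_2q_2q_2^*$ with $\lambda_1>\lambda_2\ge0$, and put $u_i=\langle q_i,e_1\rangle$, $v_i=\langle q_i,e_2\rangle$. Let $f(t)=t^s$ and let $f^{[1]}$ be its first divided difference. The Daleckii--Krein formula gives
\[
\varphi_B'(1)=\tfrac12\sum_{i,j}f^{[1]}(\lambda_i,\lambda_j)(\lambda_i+\lambda_j)\,u_i\bar u_j\,\bar v_i v_j
\]
(up to the conjugation convention). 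In dimension $2$ the matrix $(u_i,v_i)$ is unitary. Therefore $|u_1v_1|=|u_2v_2|$ and $u_1\bar v_1=-u_2\bar v_2$, so every term is a multiple of $|u_1v_1|^2$, with sign $+$ on the diagonal and $-$ off the diagonal. This collapses the sum to
\[
\varphi_B'(1)=|u_1v_1|^2\Bigl[s(\lambda_1^s+\lambda_2^s)-(\lambda_1+\lambda_2)\frac{\lambda_1^s-\lambda_2^s}{\lambda_1-\lambda_2}\Bigr].
\]

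The remaining step, which I expect to be the main obstacle, is the scalar inequality
\[
s(x^s+y^s)(x-y)\ \ge\ (x+y)(x^s-y^s),\qquad x>y\ge0,\ s\ge1 .
\]
By homogeneity it is enough to set $y=tx$ with $0\le t<1$ and show
\[
h(t)=s(1+t^s)(1-t)-(1+t)(1-t^s)\ge0 .
\]
The checks so far are consistent with this. At $t=0$ one gets $h(0)=s-1\ge0$; at $s=1$, $h\equiv0$; and $h(1)=0$. My first attempt will be to show $h'(1)=0$ and then study the sign of $h''$. An alternative, which I would try if that gets messy, is to substitute $t=e^{-2u}$. This rewrites the inequality as $s\coth(su)\ge\coth u$ for $u>0$, i.e.\ the monotonicity of $v\mapsto v\coth v$ on $(0,\infty)$, which is immediate. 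Once the scalar inequality is in hand, $\varphi_B'(1)\ge0$. By the semigroup reduction $\varphi_B$ is then nondecreasing on $(0,1]$, and evaluating at $a$ versus $1$ gives \eqref{eq:two-by-two-endpoint}.
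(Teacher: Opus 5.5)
Your proof is correct and is essentially the paper's argument. The paper normalizes $B_{11}=1$, $B_{12}=\theta\sqrt u\geq0$, $B_{22}=u$, observes $D_aBD_a=M_{au}$, and proves $u\mapsto\langle e_1,M_u^se_1\rangle$ nondecreasing; this is the coordinate form of your monotonicity of $a\mapsto\varphi_B(a)$, and its sign condition is the same inequality $\tanh(sx)\leq s\tanh x$ (equivalently your $s\coth(sx)\geq\coth x$). The only difference is that you get the derivative from the semigroup property and the Daleckii--Krein formula at $a=1$, where the $2\times2$ relation $u_1\bar v_1=-u_2\bar v_2$ collapses the sum directly to the bracket, while the paper differentiates an explicit eigenvalue expression for $\psi_s$; the two results agree, since $\varphi_{M_u}'(1)=u\psi_s'(u)$ and the paper's prefactor simplifies via $\mu_+\mu_-=(1-\theta^2)u$.
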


\begin{proof}
	If $B_{11}=0$, then positivity of $B$ gives $B_{12}=0$, and both sides of
	\eqref{eq:two-by-two-endpoint} are zero. Assume $B_{11}>0$. By homogeneity we
	may normalize $B_{11}=1$. After conjugating by a diagonal unitary which fixes
	$e_1$ and commutes with $D_a$, we may also assume that $B_{12}\geq0$. Hence
	\[
	B=
	\begin{pmatrix}
		1&\theta\sqrt u\\
		\theta\sqrt u&u
	\end{pmatrix}
	=:M_u,
	\qquad
	u\geq0,\quad 0\leq\theta\leq1.
	\]
	Then
	$
	D_aBD_a=M_{au}.
	$
	It is therefore enough to prove that
	$
	\psi_s(u):=\langle e_1,M_u^s e_1\rangle
	$
	is nondecreasing on $[0,\infty)$.
	The cases $\theta=0$ and $\theta=1$ are immediate. Indeed, if $\theta=0$, then
	$M_u=\operatorname{diag}(1,u)$ and $\psi_s(u)=1$; if $\theta=1$, then
	$M_u$ has rank one and
	$
	\psi_s(u)=(1+u)^{s-1},
	$
	which is nondecreasing for $s\geq1$.
	
	Assume now that $0<\theta<1$ and $u>0$. Let
	\[
	\Delta=\sqrt{(1-u)^2+4\theta^2u}
	\]
	and let
	\[
	\mu_\pm=\frac{1+u\pm\Delta}{2}
	\]
	be the eigenvalues of $M_u$. A direct spectral calculation gives
	\[
	\psi_s(u)
	=
	\frac{\mu_+^s+\mu_-^s}{2}
	+
	\frac{1-u}{2\Delta}(\mu_+^s-\mu_-^s).
	\]
	Differentiating this expression yields
	\[
	\psi_s'(u)
	=
	\frac{\theta^2(1-\theta^2)u}{\mu_+\mu_-\Delta^3}
	\left[
	(s\Delta+1+u)\mu_-^s+
	(s\Delta-1-u)\mu_+^s
	\right].
	\]
	Thus it remains to prove that the bracket is nonnegative. Put
	$
	r=\frac{\mu_-}{\mu_+}\in(0,1).
	$
	Since
	\[
	\frac{\Delta}{1+u}
	=
	\frac{\mu_+-\mu_-}{\mu_++\mu_-}
	=
	\frac{1-r}{1+r},
	\]
	the bracket equals
	\[
	\mu_+^s(1+u)
	\left[
	\left(1+s\frac{1-r}{1+r}\right)r^s
	+
	s\frac{1-r}{1+r}
	-1
	\right].
	\]
	The expression in square brackets is nonnegative precisely when
	\[
	\frac{1-r^s}{1+r^s}
	\leq
	s\frac{1-r}{1+r}.
	\]
	Writing $r=e^{-2x}$, $x\geq0$, this becomes
	\[
	\tanh(sx)\leq s\tanh x.
	\]
	This inequality follows because
	\[
	h(x)=s\tanh x-\tanh(sx)
	\]
	satisfies $h(0)=0$ and
	\[
	h'(x)
	=
	s\bigl(\operatorname{sech}^2x-\operatorname{sech}^2(sx)\bigr)
	\geq0
	\qquad (x\geq0,\ s\geq1).
	\]
	Hence $\psi_s'(u)\geq0$, and therefore
	\[
	\psi_s(u)\geq\psi_s(au),
	\qquad 0\leq a\leq1.
	\]
	This proves \eqref{eq:two-by-two-endpoint}.
\end{proof}

The endpoint comparison implies the full two-dimensional monotone-weight
inequality.

\begin{theorem}\label{thm:two-by-two-all-s}
	Let $A,B\in\overline{\mathbb P}_2$, let $f$ be a nonnegative nondecreasing
	function on an interval containing $\spec(A)$, and let $s\geq1$. Then
	\begin{equation}\label{eq:two-by-two-all-s}
		\Tr f(A)A^sB^s
		\geq
		\Tr f(A)(A^{1/2}BA^{1/2})^s .
	\end{equation}
\end{theorem}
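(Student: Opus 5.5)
Diagonalize $A$ and reduce the weighted inequality to the endpoint comparison of Lemma~\ref{lem:two-by-two-endpoint}, using a layer-cake decomposition of $f$. By unitary invariance we may assume $A=\diag(\lambda_1,\lambda_2)$ with $\lambda_1\geq\lambda_2\geq0$. If $\lambda_1=0$ both sides vanish, since then $A=0$ and $A^s=0$ for $s\geq1$. Otherwise, homogeneity in $A$ lets us normalize $\lambda_1=1$: replacing $A$ by $A/\lambda_1$ multiplies both $A^sB^s$ and $(A^{1/2}BA^{1/2})^s$ by $\lambda_1^{-s}$, and we replace $f$ by $t\mapsto f(\lambda_1 t)$, which is still nonnegative and nondecreasing. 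So we may take $A=\diag(1,a)=D_a^2$ with $0\leq a\leq1$. Then $f(A)=\diag(f(1),f(a))$ with $f(1)\geq f(a)\geq0$, and we write
\[
f(A)=f(a)I+\bigl(f(1)-f(a)\bigr)e_1e_1^*.
\]
Both sides of \eqref{eq:two-by-two-all-s} are linear in $f(A)$, so it suffices to check the inequality for the two weights $I$ and $P=e_1e_1^*$, whose coefficients $f(a)$ and $f(1)-f(a)$ are nonnegative.

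\textbf{Identity weight.} Here \eqref{eq:two-by-two-all-s} reads $\Tr A^sB^s\geq\Tr(A^{1/2}BA^{1/2})^s$, which is the Araki--Lieb--Thirring inequality \eqref{eq:ALT-trace}, valid for $s\geq1$ in any dimension.

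\textbf{Projection weight.} Since $PA^s=P$, the left side is $\Tr PA^sB^s=\langle e_1,B^se_1\rangle$. Since $A^{1/2}=D_a$, the right side is $\langle e_1,(D_aBD_a)^se_1\rangle$. This is exactly \eqref{eq:two-by-two-endpoint} of Lemma~\ref{lem:two-by-two-endpoint}, which holds for $B\in\overline{\mathbb P}_2$ and $s\geq1$. Adding the two cases with their nonnegative coefficients gives \eqref{eq:two-by-two-all-s}.

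The reduction carries no real obstacle; all the difficulty sits in Lemma~\ref{lem:two-by-two-endpoint}, which is already proved. The points to check are the following.
\begin{itemize}
\item The singular case $\lambda_2=0$ is covered, because the lemma allows $a=0$ and $B$ only positive semidefinite.
\item The case $\lambda_1=\lambda_2$ is covered by $a=1$, where the comparison becomes an equality.
\item Only the values of $f$ at $\lambda_1,\lambda_2$ enter, so the argument uses nothing about $f$ beyond monotonicity on $\spec(A)$.
\end{itemize}
This also shows why dimension $2$ is special: a monotone weight on a two-point spectrum is a nonnegative combination of $I$ and a single spectral projection. In dimension $3$ the middle projection $e_1e_1^*$ in Theorem~\ref{thm:monotone-counter} sits over a genuinely two-dimensional block, which is what allows the counterexample there.
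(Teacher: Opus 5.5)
Your proof is correct and follows the paper's argument: the same decomposition $f(A)=f(\lambda_2)I+\bigl(f(\lambda_1)-f(\lambda_2)\bigr)P_1$ with nonnegative coefficients, the Araki--Lieb--Thirring inequality for the identity part, and Lemma~\ref{lem:two-by-two-endpoint} for the projection part. The only difference is cosmetic: you normalize $\lambda_1=1$ by homogeneity, which also absorbs the scalar case as $a=1$, whereas the paper keeps $\lambda_1$ and factors out $\lambda_1^s$ via $A=\lambda_1D_a^2$ and $A^{1/2}BA^{1/2}=\lambda_1D_aBD_a$.
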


\begin{proof}
	If $A$ is a scalar matrix, then equality holds. Otherwise, diagonalize $A$ as
	\[
	A=\lambda_1P_1+\lambda_2P_2,
	\qquad
	\lambda_1>\lambda_2\geq0.
	\]
	Write
	\[
	f(A)=f(\lambda_2)I+
	\bigl(f(\lambda_1)-f(\lambda_2)\bigr)P_1.
	\]
	Both coefficients are nonnegative.
	
	The identity part is handled by the Araki-Lieb-Thirring inequality:
	\[
	\Tr A^sB^s
	\geq
	\Tr(A^{1/2}BA^{1/2})^s .
	\]
	It remains to compare the $P_1$ part. If $\lambda_1=0$, then $A=0$ and there is
	nothing to prove. Otherwise put
	\[
	a=\frac{\lambda_2}{\lambda_1},
	\qquad
	D_a=\begin{pmatrix}1&0\\0&\sqrt a\end{pmatrix}
	\]
	in the eigenbasis of $A$. Then
	\[
	A=\lambda_1D_a^2,
	\qquad
	A^{1/2}BA^{1/2}
	=
	\lambda_1D_aBD_a.
	\]
	Lemma~\ref{lem:two-by-two-endpoint} gives
	\[
	\Tr P_1A^sB^s
	=
	\lambda_1^s\langle e_1,B^s e_1\rangle
	\geq
	\lambda_1^s\langle e_1,(D_aBD_a)^s e_1\rangle
	=
	\Tr P_1(A^{1/2}BA^{1/2})^s .
	\]
	Multiplying the identity comparison and the $P_1$ comparison by the two
	nonnegative coefficients in the decomposition of $f(A)$ proves
	\eqref{eq:two-by-two-all-s}.
\end{proof}

Theorem~\ref{thm:two-by-two-all-s} and Theorem~\ref{thm:monotone-counter}
show that dimension $3$ is exactly the first dimension in which the
monotone-weight reverse inequality can fail.

\begin{corollary}\label{cor:minimal-dimension}
	For $s>2$, dimension $3$ is the minimal dimension in which the monotone-weight
	reverse trace inequality can fail. More precisely, the inequality
	\[
	\Tr f(A)A^sB^s
	\geq
	\Tr f(A)(A^{1/2}BA^{1/2})^s
	\]
	holds for all $A,B\in\overline{\mathbb P}_2$ and all nonnegative
	nondecreasing weights $f$ on an interval containing $\spec(A)$, while for every
	$s>2$ there exist $A,B\in\mathbb P_3$ and such a weight $f$ for which the
	inequality is strictly reversed.
\end{corollary}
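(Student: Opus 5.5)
The corollary combines two results already proved, so the plan is to assemble them and check that their hypotheses line up.

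\emph{Dimension $2$.} For the positive statement I would invoke Theorem~\ref{thm:two-by-two-all-s} directly. It gives
\[
\Tr f(A)A^sB^s\geq\Tr f(A)(A^{1/2}BA^{1/2})^s
\]
for all $A,B\in\overline{\mathbb P}_2$, every nonnegative nondecreasing $f$ on an interval containing $\spec(A)$, and every $s\geq1$. In particular this covers every $s>2$.

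\emph{Dimension $1$.} I would also note that the inequality trivially holds in dimension $1$: there $A$ and $B$ are nonnegative scalars, so both sides equal $f(A)A^sB^s$. Hence no dimension below $3$ admits a failure.

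\emph{Dimension $3$.} For failure I would cite Theorem~\ref{thm:monotone-counter}. For each $s>2$ it produces $A,B\in\mathbb P_3$ and a continuous nonnegative nondecreasing $f$ with the strict reversed inequality \eqref{eq:monotone-failure}. I would also check that this failure propagates to every dimension $n\geq3$. One embeds by direct sums $A\oplus I_{n-3}$ and $B\oplus I_{n-3}$, chosen so that $f$ is still defined on the enlarged spectrum; the extra block contributes equally to both sides, since there $A=I$ and both sides reduce to $\Tr f(1)I$. This step is optional for minimality but makes the statement robust.

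\emph{Main point to check.} The only real care is a hypothesis match. The weight in Theorem~\ref{thm:two-by-two-all-s} is exactly the class in the corollary, and the counterexample weight of Theorem~\ref{thm:monotone-counter} belongs to that same class. Given this, no new obstacle arises and the corollary follows immediately.
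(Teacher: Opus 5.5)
Your proposal is correct and follows the paper's argument exactly: the corollary is obtained by combining Theorem~\ref{thm:two-by-two-all-s} (validity in dimension $2$ for every $s\geq1$) with Theorem~\ref{thm:monotone-counter} (strict failure in $\mathbb P_3$ for every $s>2$). Your extra remarks on dimension $1$ and on padding the counterexample by $\oplus I_{n-3}$ to reach every $n\geq3$ are also correct. The padding works because the counterexample has $1\in\spec(A)$, so $f$ stays defined on the enlarged spectrum. The paper does not need either remark.
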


\section{Returning to the original $(p,q,r)$ variables}\label{sec:corrected}

We now translate Theorem~\ref{thm:sharp-power-log} back to the original
parameters $0<p\leq q$. This gives the sharp boundary for the reverse Liu--Cheng log-majorization
problem in the parameter range considered here.

\begin{theorem}\label{thm:corrected-GBLP}
	Let $0<p\leq q$ and $r\geq0$. The following are equivalent:
	\begin{enumerate}
		\item For every $n\geq1$ and all $A,B\in\overline{\mathbb P}_n$, one has
		\begin{equation}\label{eq:corrected-GBLP}
			A^{r+q}B^q\succ_{\log}
			A^r\bigl(A^{p/2}B^pA^{p/2}\bigr)^{q/p}.
		\end{equation}
		
		\item Either
		\begin{equation}\label{eq:corrected-range-1}
			p\leq q\leq2p,
		\end{equation}
		or
		\begin{equation}\label{eq:corrected-range-2}
			q>2p
			\quad\text{and}\quad
			0\leq r\leq\frac{pq}{q-2p}.
		\end{equation}
	\end{enumerate}
	Moreover, if $q>2p$ and $r>pq/(q-2p)$, then even the corresponding trace
	inequality fails for some $A,B\in\mathbb P_3$.
\end{theorem}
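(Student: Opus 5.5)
The plan is to reduce Theorem~\ref{thm:corrected-GBLP} to the normalized Theorem~\ref{thm:sharp-power-log} by a change of variables. Given $0<p\leq q$ and $r\geq0$, set
\[
A'=A^p,\qquad B'=B^p,\qquad s=\frac qp\geq1,\qquad r'=\frac rp .
\]
Then $A^{r+q}=(A')^{r'+s}$ and $B^q=(B')^s$. Also $A^{p/2}B^pA^{p/2}=(A')^{1/2}B'(A')^{1/2}$ and $A^r=(A')^{r'}$. Hence \eqref{eq:corrected-GBLP} for $(A,B)$ is literally \eqref{eq:sharp-power-log} for $(A',B')$ with parameters $(r',s)$. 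The same identification holds at the level of the Hermitian representatives used in the proof of Theorem~\ref{thm:sharp-power-log}, so spectra are matched.

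Since $X\mapsto X^p$ is a bijection of $\overline{\mathbb P}_n$ onto itself (and of $\mathbb P_n$ onto itself), the universal statement (1) for $(p,q,r)$ is equivalent to the universal statement of \eqref{eq:sharp-power-log} for $(r',s)$. This bijectivity is what makes the reduction an equivalence rather than only an implication.

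Theorem~\ref{thm:sharp-power-log} then says the universal statement holds iff $1\leq s\leq2$, or $s>2$ and $r'\leq s/(s-2)$. Translating back:
\begin{itemize}
\item $1\leq q/p\leq2$ is exactly \eqref{eq:corrected-range-1}.
\item $s>2$ means $q>2p$.
\item For $q>2p$, the bound $r/p\leq (q/p)/(q/p-2)=q/(q-2p)$ is $r\leq pq/(q-2p)$, which is \eqref{eq:corrected-range-2}.
\end{itemize}

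For the final claim, take $q>2p$ and $r>pq/(q-2p)$, so that $r'>s/(s-2)$. Theorem~\ref{thm:power-counter} gives $A',B'\in\mathbb P_3$ with $\Phi_{r',s}(A',B')<0$. Put $A=(A')^{1/p}$ and $B=(B')^{1/p}$, which lie in $\mathbb P_3$. Then
\[
\Tr A^{r+q}B^q<\Tr A^r\bigl(A^{p/2}B^pA^{p/2}\bigr)^{q/p}.
\]

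There is no real obstacle. The only point needing care is bookkeeping: checking that every exponent identity is a genuine functional-calculus identity for positive semidefinite matrices, and that the power map preserves the relevant cones.
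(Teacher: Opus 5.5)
Your proposal is correct and follows the paper's proof essentially verbatim: the same substitution $\widetilde A=A^p$, $\widetilde B=B^p$, $s=q/p$, $\widetilde r=r/p$ reduces sufficiency to Theorem~\ref{thm:sharp-power-log}, and the $3\times3$ counterexample from Theorem~\ref{thm:power-counter} is transferred back by taking $p$-th roots. Your explicit remark that $X\mapsto X^p$ is a bijection of $\overline{\mathbb P}_n$ and of $\mathbb P_n$ spells out a step the paper uses only implicitly; it is what makes the reduction an equivalence.
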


Before the proof, let us separate the known and new parts of
Theorem~\ref{thm:corrected-GBLP}. The range
$
p\leq q\leq2p,\qquad r\geq0,
$
is due to Shi-Wei-Wang \cite{ShiWeiWang}. In the remaining range $q>2p$, the
subrange
$
0\leq r\leq p
$
is already contained in the known reverse GBLP range; see, for example,
\cite{MatsumotoNakamotoFujii,Furuta2012,LemosSoares,ShiWeiWang}. Therefore the
new affirmative part of Theorem~\ref{thm:corrected-GBLP} is
$
p<r\leq \frac{pq}{q-2p},
$
and the matching new negative part is the failure for
$
r>\frac{pq}{q-2p}.
$

\begin{proof}
	Set
	\[
	\widetilde A=A^p,
	\qquad
	\widetilde B=B^p,
	\qquad
	s=\frac qp,
	\qquad
	\widetilde r=\frac rp.
	\]
	Then \eqref{eq:corrected-GBLP} is exactly
	\[
	\widetilde A^{\widetilde r+s}\widetilde B^s
	\succ_{\log}
	\widetilde A^{\widetilde r}
	(\widetilde A^{1/2}\widetilde B\widetilde A^{1/2})^s.
	\]
	Theorem \ref{thm:sharp-power-log} gives the positive range. Namely, if
	$1\leq s\leq2$, i.e. $p\leq q\leq2p$, all $\widetilde r\geq0$ are allowed. If
	$s>2$, i.e. $q>2p$, the condition is
	$
	\widetilde r\leq\frac{s}{s-2},
	$
	which is equivalent to
	$
	r\leq \frac{pq}{q-2p}.
	$
	This proves sufficiency.
	
	Conversely, if $q>2p$ and $r>pq/(q-2p)$, then
	$\widetilde r>s/(s-2)$. Theorem \ref{thm:power-counter} gives positive definite
	$\widetilde A,
	\widetilde B$ for which the trace inequality fails. Taking
	$A=\widetilde A^{1/p}$ and $B=\widetilde B^{1/p}$ gives a counterexample to the
	trace inequality corresponding to \eqref{eq:corrected-GBLP}. Hence
	\eqref{eq:corrected-GBLP} cannot hold universally.
\end{proof}

Applying Theorem~\ref{thm:corrected-GBLP} with $r=t$ gives the sharp form of
Shi-Wei-Wang's Conjecture 3.1.

\begin{corollary}\label{cor:SWW-conj31}
	Let $p>0$, $q>2p$, and $t\geq0$. Then
	\[
	A^{t+q}B^q
	\succ_{\log}
	A^t\bigl(A^{p/2}B^pA^{p/2}\bigr)^{q/p}
	\]
	holds for every $n\geq1$ and all $A,B\in\overline{\mathbb P}_n$ if and only if
	$
	0\leq t\leq \frac{pq}{q-2p}.
	$
	
	In particular, Conjecture 3.1 of Shi-Wei-Wang \cite{ShiWeiWang}, which asks
	for all $t\geq0$ when $q>2p$, is false as stated. If
	$
	t>\frac{pq}{q-2p},
	$
	then even the corresponding trace inequality fails for some
	$A,B\in\mathbb P_3$.
\end{corollary}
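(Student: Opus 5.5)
This corollary is a direct specialization of Theorem~\ref{thm:corrected-GBLP}. I would apply that theorem with $r=t$ and observe that the hypothesis $q>2p$ makes the alternative \eqref{eq:corrected-range-1} impossible. Indeed, $p\leq q\leq 2p$ contradicts $q>2p$. Since $p>0$ and $q>2p$ give $q>p$, the standing assumption $0<p\leq q$ of Theorem~\ref{thm:corrected-GBLP} is satisfied. Condition (2) of that theorem therefore reduces exactly to \eqref{eq:corrected-range-2}, namely $0\leq t\leq pq/(q-2p)$. The equivalence $(1)\Leftrightarrow(2)$ then gives the stated ``if and only if''.

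For the second paragraph of the corollary, I would note that Conjecture~3.1 of Shi--Wei--Wang asserts the log-majorization for all $t\geq0$ whenever $q>2p$. The threshold $pq/(q-2p)$ is finite, so any $t>pq/(q-2p)$ violates the equivalence just established, and the conjecture is false as stated. The final sentence follows from the ``Moreover'' clause of Theorem~\ref{thm:corrected-GBLP}. That clause rests on Theorem~\ref{thm:power-counter} via the substitution $\widetilde A=A^p$, $\widetilde B=B^p$, $\widetilde r=t/p$, $s=q/p$. It yields $A,B\in\mathbb P_3$ with
\[
\Tr A^{t+q}B^q<\Tr A^t\bigl(A^{p/2}B^pA^{p/2}\bigr)^{q/p}.
\]

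There is no real obstacle here; the only care needed is bookkeeping. I would check that the $r$ in Theorem~\ref{thm:corrected-GBLP} is the same weight exponent as $t$. I would also confirm that the trace inequality referred to is the trace consequence of the log-majorization, which holds because log-majorization implies weak majorization.
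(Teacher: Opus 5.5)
Your proposal is correct and follows the paper's argument exactly: the paper obtains this corollary by applying Theorem~\ref{thm:corrected-GBLP} with $r=t$. There, $q>2p$ excludes the range $p\leq q\leq 2p$, and the trace failure for $t>pq/(q-2p)$ is the ``Moreover'' clause, which reduces via $\widetilde A=A^p$, $\widetilde B=B^p$ to Theorem~\ref{thm:power-counter}. Your checks of the standing hypothesis $0<p\leq q$ and of the exponent bookkeeping in that substitution are accurate.
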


This proves the sharp range in the original $(p,q,r)$ variables and completes
the translation of the normalized theorem back to the reverse Liu--Cheng
parameter range.

\section{Conclusion}

We have identified two distinct sharpness mechanisms for reverse Araki-type
inequalities. For arbitrary monotone weights, the dimension-free reverse trace
inequality has the sharp transition
\[
1\leq s\leq2.
\]
For every $s>2$ it fails already in dimension $3$, while dimension $2$ remains
exceptional: the same monotone-weight reverse inequality holds there for every
$s\geq1$.

For power weights, the positive range extends beyond $s=2$ but only up to a
hyperbolic boundary. In normalized variables the sharp cutoff is
\[
r=\frac{s}{s-2},
\qquad s>2,
\]
and in the original variables it becomes
\[
r=\frac{pq}{q-2p},
\qquad q>2p.
\]
Thus the known all-$r$ range $p\leq q\leq2p$ is complemented, in the remaining
range $q>2p$, by the exact condition
\[
0\leq r\leq \frac{pq}{q-2p}.
\]
The counterexamples beyond this boundary are matching and already occur for
$3\times3$ positive definite matrices.




\end{document}